\newcommand{\mod}{\operatorname{mod}}
\newcommand{\E}{\mathbb E}
\newcommand{\R}{\mathbb{R}}
\newcommand{\Q}{\mathbb{Q}}
\newcommand{\bbR}{\mathbb{R}}
\newcommand{\T}{\mathbb{T}}
\newcommand{\N}{\mathbb{N}}
\newcommand{\bbN}{\mathbb{N}}
\newcommand{\Z}{\mathbb{Z}}
\newcommand{\bbZ}{\mathbb{Z}}
\renewcommand{\P}{\mathbb{P}}
\newcommand{\BB}{\mathcal{B}}
\newcommand{\LL}{\mathcal{L}}
\newcommand{\FF}{\mathcal{F}}
\newcommand{\ind}{\mathbh{1}}
\newtheorem{theorem}{Theorem}[section]
\newtheorem{lemma}[theorem]{Lemma}
\newtheorem{corollary}[theorem]{Corollary}
\newtheorem{proposition}[theorem]{Proposition}
\begin{document}

\begin{frontmatter}

\title{Stochastic integral representations and classification of sum-
and max-infinitely divisible processes}
\runtitle{Stochastic integral representations of infinitely divisible
processes}

\begin{aug}
\author[A]{\inits{Z.}\fnms{Zakhar}~\snm{Kabluchko}\thanksref{A}\ead[label=e1]{zakhar.kabluchko@uni-muenster.de}} \and
\author[B]{\inits{S.}\fnms{Stilian}~\snm{Stoev}\corref{}\thanksref{B}\ead[label=e2]{sstoev@umich.edu}}
\address[A]{Institut f\"ur Mathematische Statistik,
Universit\"at M\"unster,
Orl\'eans--Ring 10,
48149 M\"unster, Germany. \printead{e1}}
\address[B]{Department of Statistics,
University of Michigan, Ann Arbor,
439 W. Hall, 1085 S. University Avenue,
Ann Arbor, MI 48109-1107, USA.
\printead{e2}}
\end{aug}

\received{\smonth{7} \syear{2012}}
\revised{\smonth{2} \syear{2014}}

%
\begin{abstract}
Introduced is the notion of minimality for spectral representations of
sum- and max-infinitely divisible processes and it is shown that the
minimal spectral
representation on a Borel space exists and is unique. This fact is used
to show that a stationary, stochastically continuous, sum- or
max-i.d. random
process on $\mathbb{R}^d$ can be generated by a measure-preserving
flow on a $\sigma$-finite Borel measure space and that this flow is
unique. This development
makes it possible to extend the classification program of Rosi\'nski
(\textit{Ann. Probab.} \textbf{23} (1995)  1163--1187) with a unified
treatment of both sum- and max-infinitely divisible processes.
As a particular case, a characterization of stationary, stochastically
continuous, union-infinitely divisible
random measurable subsets of $\mathbb{R}^d$ is obtained. Introduced and
classified are several new max-i.d. random field models including
fields of Penrose type
and fields associated to Poisson line processes.
\end{abstract}

%
\begin{keyword}
\kwd{infinitely divisible process}
\kwd{max-infinitely divisible process}
\kwd{measure-preserving flow}
\kwd{minimality}
\kwd{Poisson process}
\kwd{spectral representation}
\kwd{stochastic integral}
\end{keyword}
\end{frontmatter}

\section{Introduction} A stochastic process $X=\{X(t), t\in T\}$ is
called \textit{infinitely divisible} (i.d.) if for every $n\in\N$ it
can be represented (in distribution) as a sum of $n$ independent identically
distributed (i.i.d.) processes. On the other hand, $X$ is said to be
\textit{max-infinitely divisible} (max-i.d.) if for every $n\in\N$
it equals in distribution to the
pointwise maximum of $n$ i.i.d. processes. Infinitely divisible
distributions and random vectors have been extensively studied in the
literature, while the max-i.d. case is relatively less
known. The important and widely studied classes of stable and
max-stable processes arise as special cases of i.d. and max-i.d. processes, respectively. Recall that $X$ is stable if
for every $n\in\N$, the sum of $n$ independent copies of $X$ equals (in
distribution), a rescaled and shifted version of the original process.
The definition of
max-stable processes is similar, with the addition replaced by
componentwise maximum.

For the class of stable processes, a particularly rich representation
and classification theory based on the notion of stochastic integral
over a stable random measure was developed in the pioneering works of
Hardin~\cite{hardin} and Rosi\'nski~\cite{rosinski95} (see also~\cite
{rosinski2,kolodynskirosinski03,pipirastaqqu2004cy,pipirastaqqu2002st,samorodnitsky05}
and the book~\cite{samorodnitskytaqqu1994book}).
In parallel with the stable case, the works \cite
{dehaanpickands86,wangstoev10} developed analogous classification
theory for max-stable processes
based on stochastic max-integrals~\cite{dehaan84,stoevtaqqu2005}. In
fact, the close connection between the sum- and max-stable cases can
be formalized through the notion of association~\mbox{\cite{kabluchkoextremes,wangstoev09a}}.

In this paper, we develop a general structure and classification theory
that applies to both i.d. and max-i.d. processes in both discrete
and continuous time. It is based on stochastic integrals over
Poisson random measures. Our main motivation is to extend the
classification program for sum-stable processes pioneered by Rosi\'
nski~\cite{rosinski95} to the infinitely divisible
setting. We develop tools for the representation and study a variety of
i.d. and max-i.d. models from a unified perspective.

Stochastic integral representations of i.d. processes were developed
in the seminal works of Maruyama~\cite{maruyama70} and Rajput and
Rosi\'nski~\cite{rosinskirajput89}, while the max-i.d. case was
addressed by Balkema \textit{et~al.} \cite{balkemaetal93}. To the best
of our knowledge, the structure theory based on such stochastic integral
representations has not been much explored. A key problem in this
context is to determine how two spectral representations of the same
i.d. or max-i.d. process are related. In the special stable case,
this problem is related to the structure of the isometries of $L^\alpha
$-spaces~\cite{hardin0,hardin} and it was elegantly resolved in terms
of the notion of \emph{minimality}. In the setting of
i.d. processes, these methods are not available. Instead, we prove a
general result on the existence of conjugacy between equimeasurable
families of functions (Lemma~\ref{lft-point}, below)
and based on this result we define a corresponding notion of \emph{minimality}. It turns out that two minimal spectral representations of
the same i.d. process defined on $\sigma$-finite Borel spaces
are related through a unique measure space isomorphism between the two
spaces. This result is then used to show that a \emph{stationary} i.d. process can be generated as a stochastic integral over a
measure-preserving flow. This extends some classification results of
Rosi{\'n}ski~\cite{rosinski95,rosinski2} on the spectral
representations of stationary stable processes.
The i.d. theory we develop here is in fact simpler (although more
general) than the stable theory of~\cite{rosinski95,rosinski2} since we
deal with \textit{measure-preserving} rather than
\textit{non-singular} flows. Our results can be
specialized to the stable case by using the Maharam construction from
ergodic theory. This sheds more light on the subtle concept of
minimality in the stable case.

Recall that the law of a finite-dimensional i.d. random vector is
characterized by its L\'evy triplet~\cite{satobook}. Here, we focus on
the case where the i.d. random vectors have
trivial Gaussian component and their laws are determined by their L\'
evy measures along with constant location vectors. In a pioneering
work, Maruyama~\cite{maruyama70} considered the case of
i.d. processes $X= \{X(t), t\in T\}$ and extended the concept of L\'
evy measure to the infinitely dimensional setting as a measure on $\R
^T$. This extension is especially non-trivial when the set $T$ is not
countable because in this case several measurability issues arise. The
L\'evy measure introduced by Maruyama~\cite{maruyama70} could be used
to establish some of the
results of the present paper. Here, we chose to develop classification
theory based on spectral representations in order to draw parallels
with the abundant theory for stable processes.

Unlike the i.d. case, every one-dimensional distribution is
max-i.d., but the situation changes in higher dimensions.
Max-infinitely divisible random vectors are characterized by the so called
\emph{exponent measure}, which plays a role similar to that of the L\'
evy measure in the i.d. case (see, e.g., Chapter~5 in \cite
{resnickbook}). Two-dimensional max-i.d. distributions were
introduced by Balkema and Resnick \cite{balkemaresnick77}, the general
$d$-dimensional case was considered by Gerritse \cite{gerritse86} and
Vatan \cite{vatan85} (the latter work studies also max-i.d. vectors with
values in $\R^{\N}$). Representations in terms of suprema over a
Poisson point process were obtained by Gin\'e \textit{et~al.} \cite
{gineetal90} for max-i.d. processes with continuous sample paths and
by Balkema \textit{et~al.} \cite{balkemaetal93} for stochastically
continuous processes. It seems that in the case of uncountable $T$, the
general concept of exponent measure (parallel to~\cite{maruyama70}) has
not been studied. In this
paper, we develop the representation theory of max-i.d. processes
further, for example, by proving existence and uniqueness of the minimal
spectral representation and by constructing a representation over a
measure-preserving flow for stationary processes. We illustrate our
theory by introducing and classifying several new examples of
max-i.d. processes. As a special case, we arrive at a representation
result for stationary union infinitely divisible random sets,
which may be of independent interest. All our examples have direct
analogs in the sum-i.d. context.

\textit{The paper is organized as follows.} In Section~\ref{secmax-id},
we introduce minimal spectral representations for max-i.d. processes
and show their existence and uniqueness under
the general Condition~\textup{S} of separability in probability. Section~\ref
{secid} contains parallel results for i.d. processes. In Section~\ref
{secmeasver}, we discuss measurability of i.d. and max-i.d. processes.
In Section~\ref{secflow}, the developed theory
is used to associate stationary i.d. and max-i.d. processes to
measure preserving flows leading to extensions of known classification
results on stable processes.
In Section~\ref{secexamples}, we present several examples and applications.
The connection between the new notion of minimal spectral
representations and the existing ones for stable processes
is demonstrated in Section~\ref{secmax-stable}. In Sections~\ref{seciid}--\ref{secPenrose}, we present new examples of stationary
max-i.d. processes associated with
dissipative, conservative, or null flows. In Section~\ref{secU-id}, we
characterize the stationary union infinitely divisible random sets by
relating them to max-i.d.~processes.
The proofs are given in Section~\ref{secproofs}.

\section{Spectral representations of i.d. and max-i.d. processes}
\subsection{Spectral representations of max-i.d. processes} \label{secmax-id}
A stochastic process $X=\{X(t), t\in T\}$ defined on an index set $T$
and taking values in $\R$ is called max-i.d., if for all
$n\in\N$, there exist independent identically distributed (i.i.d.) processes $\{X_{i,n}(t), t\in T\}$, $i=1,\ldots,n$, such that%
%
\begin{equation}
\label{eqdefmaxidprocess} \bigl\{X(t), t\in T\bigr\} \stackrel{d} {=} \Bigl\{ \max
_{1\le i\le n} X_{i,n}(t), t\in T \Bigr\}.
\end{equation}
Here, $\stackrel{d}{=}$ denotes the equality of finite-dimensional
distributions. If $\{X(t), t\in T\}$ is a max-i.d. process, then for
every collection of non-decreasing functions $\varphi_t\dvtx \R\to\R$,
$t\in T$, the process $\{\varphi_t(X(t)), t\in T\}$ is also max-i.d.
By choosing
\[
\varphi_t(x) = \cases{ \mathrm{e}^{x}, &\quad if $\operatorname{essinf}  X(t) = -\infty $,
\cr
x-\operatorname{essinf}
X(t), &\quad if $\operatorname{essinf}  X(t) > -\infty$} %
\]
we can always achieve that $\operatorname{essinf} \varphi
_t(X(t))=0$.
In the sequel, we therefore assume without loss of generality that
$\operatorname{essinf}  X(t)=0$ for every $t\in T$.

Balkema \textit{et~al.} \cite{balkemaetal93} gave a representation of
max-i.d. processes in terms of stochastic max-integrals over a
Poisson point process. This representation is, in general, non-unique;
see Example~\ref{exnon-uniqueness} below. We will introduce the notion
of \textit{minimality} for representations of max-i.d. processes and
prove that the minimal representation exists and is unique.

We recall the construction of Balkema \textit{et~al.} \cite
{balkemaetal93} in a form which is suitable for our purposes. Let
$(\Omega,\BB,\mu)$ be a $\sigma$-finite measure space.
We denote by $\LL^{\vee}=\LL^{\vee}(\Omega,\BB,\mu)$ the space
of all
measurable functions $f\dvtx \Omega\to\R$ such that $f\geq0$ $\mu
$-a.e. and $\mu\{\omega\dvtx  f(\omega) > a\}$ is finite
for all $a>0$. As usual, two functions are identified if they differ on
a set of measure zero.
Note that
for every $f_1,f_2\in\LL^{\vee}$ and $c_1,c_2\geq0$ we have $\max
(c_1f_1,c_2f_2)\in\LL^{\vee}$.
Next let us recall the definition of the max-integral from~\cite
{balkemaetal93}. Let $\Pi_\mu=\{U_i, i\in J\}$ be a Poisson point
process on the space $(\Omega,\BB)$ with intensity $\mu$. Here, $J$ is
at most countable index set. For $f\in\LL^{\vee}$ define the
stochastic max-integral
%
%
\begin{equation}
\label{ee-int-def} I(f)\equiv\int^{\vee}_{\Omega} f\,\mathrm{d} \Pi_{\mu}:=\sup_{i\in J} f(U_i).
\end{equation}
Here, the supremum is taken over all atoms $U_i$ of the Poisson process
$\Pi_{\mu}$. If $\Pi_{\mu}$ is empty, which can happen if $\mu
(\Omega
)<\infty$, then the supremum in
the right-hand side is defined to be $0$. From~(\ref{ee-int-def}), one
readily derives a formula for the
joint distribution of the stochastic max-integrals: for all $f_1,\ldots,f_n\in\LL^\vee$ and $x_1,\ldots,x_n\geq0$ (not all of which are
$0$), we have
%
%
\begin{eqnarray}\label{eX-fdd}
\P\bigl\{ I(f_j) < x_j, 1\leq j\leq n\bigr\} &=& \P\Biggl\{ \Pi_{\mu}  \Biggl( \bigcup_{j=1}^n
\{ f_{j} \geq x_j\}  \Biggr) = 0  \Biggr\}
\nonumber\\[-8pt]\\[-8pt]
&=& \exp \Biggl\{ -\mu \Biggl(\bigcup_{j=1}^n
\{ f_{j} \geq x_j\} \Biggr)  \Biggr\} .\nonumber
\end{eqnarray}

Observe that for any collection of deterministic functions $f_t \in\LL
^\vee, t\in T$, the process $\{I(f_t), t\in T\}$,
is max-i.d. since the $X_{i,n}$'s in~(\ref{eqdefmaxidprocess}) can be
defined by using independent copies of the same stochastic
max-integrals but with respect to a Poisson point process with
intensity $\frac{1}n \mu$.

%
%
\begin{definition}
Let $X=\{X(t), t\in T\}$ be a max-i.d. process with $\operatorname{essinf}  X(t)=0$
for all $t\in T$. A~collection of functions $\{f_t, t\in T\}\subset\LL
^\vee(\Omega,\BB,\mu)$ is a \textit{spectral representation} of the
process $X$ if we have the following equality of laws:
%
%
\begin{equation}
\label{eqspecrepdef} \bigl\{X(t), t\in T\bigr\} \stackrel{d} {=} \biggl\{\int
^{\vee}_{\Omega} f_t \,\mathrm{d}\Pi_{\mu}, t
\in T \biggr\},
\end{equation}
where $\Pi_\mu$ is a Poisson point process on $(\Omega, \BB)$ with
intensity $\mu$.
\end{definition}

Here, we focus on the general class of processes that are separable in
probability in the sense of the following definition.

%
%
\begin{definition}
A stochastic process $\{X(t), t\in T\}$ satisfies Condition~\textup{S}
if there is an at most countable set $T_0\subset T$ such that for all
$t\in T$, there exists a sequence $\{t_n\}_{n\in\N}\subset T_0$, with
$X(t_n) \to X(t)$ in probability.
\end{definition}

As shown in Balkema \textit{et~al.} \cite{balkemaetal93}, the
convergence in probability for max-i.d. random variables is equivalent
to convergence in measure of
their spectral functions.

%
%
\begin{proposition}[(Balkema \textit{et~al.} \cite{balkemaetal93})]
\label
{pmaxpconv} Let $f_n \in\LL^\vee(\Omega,\BB,\mu), n\in\N$. Then,
there is a random variable $\xi$ such that $I(f_n)\to\xi$ in
probability as $n\to\infty$, if and only if, there exists $f\in\LL
^\vee(\Omega, \BB,\mu)$ such that $f_n \to f$ in measure, as $n\to
\infty$. In this case, $\xi= I(f)$ a.s.
\end{proposition}

The proof follows from Theorems 4.4 and 4.5 in \cite{balkemaetal93}.

%
%
\begin{remark} \label{remmaxisometry} Observe that if $f_n\in\LL
^\vee
(\Omega,\BB,\mu)$ and $f_n\to f$, $n\to\infty$, in measure,
then necessarily $f\in\LL^\vee(\Omega,\BB,\mu)$. Indeed, since
for all
$\epsilon>0$, we have
$\{f>\epsilon\} \subset\{f_n>\epsilon/2\} \cup\{|f-f_n|>\epsilon
/2\}
$, it follows that $\mu\{f>\epsilon\}<\infty$, for all $\epsilon
>0$. Thus,
$\LL^\vee(\Omega,\BB,\mu)$ is closed with respect to convergence in
measure, which in fact can be metrized by a version of the Ky Fan metric:
%
%
\begin{equation}
\label{edmu} d_\mu(f,g):= \inf\bigl\{ \epsilon>0\dvtx  \mu\bigl(|f-g|\ge
\epsilon\bigr) \le \epsilon\bigr\}.
\end{equation}
Note that $\mu(|f-g|\ge\epsilon) <\infty$, for all $\epsilon>0$ and
$f, g\in\LL^\vee(\Omega,\BB,\mu)$. One can show that $\LL^\vee
(\Omega,\BB,\mu)$
equipped with $d_\mu$ becomes a complete metric space. Thus,
Proposition \ref{pmaxpconv} entails that the stochastic max-integral operator\vspace*{1pt}
is a homeomorphism of metric spaces. More precisely, $I\dvtx (\LL^\vee
(\Omega,\BB,\mu),d_\mu) \to(\LL^0(\P), d_{\mathrm{KF}})$ is a continuous bijection
onto its image with a continuous inverse. Here, $\LL^0(\P)$ is the
space of random variables on the probability space $(E,\FF,\P)$ on
which the Poisson process $\Pi_{\mu}$ is defined. The space $\LL
^0(\P)$
is endowed with the Ky Fan metric $d_{\mathrm{KF}}$ that metrizes the
convergence in probability (see, e.g., (\ref{eKyFan}) below).
\end{remark}

%
%
\begin{theorem}[(Balkema \textit{et~al.} \cite{balkemaetal93})]\label{theobalkema}
Let $\{X(t), t\in\R^d\}$ be a max-i.d. process satisfying Condition~\textup{S}.
There exists a spectral representation of $X$ defined on $\R$ endowed
with the Lebesgue measure.
\end{theorem}

We will prove existence and uniqueness of the spectral representation
under the following condition of minimality.

%
\begin{definition} \label{defmin}
A spectral representation $\{f_t, t\in T\}\subset\LL^{\vee}(\Omega,\BB,\mu)$ is called \textit{minimal} if the following two conditions hold:
\begin{enumerate}[(ii)]
\item[(i)] The $\sigma$-algebra generated by $\{f_t, t\in T\}$
coincides with $\BB$ up to $\mu$-zero sets. That is, for every $B\in
\BB$, exists an $A\in\sigma\{f_t, t\in T\}$, such that $\mu(A\Delta
B) =0$.
\item[(ii)] There is no set $B\in\BB$ such that $\mu(B)>0$ and for
every $t\in T$, $f_t=0$ a.e. on $B$.
\end{enumerate}
If just the second condition holds, we will say that the representation
has \textit{full support}.
\end{definition}

%
%
\begin{remark}
The first condition does not imply the second one: consider $\Omega=\{
0,1\}$ with counting measure, $T=\{1\}$, and $f_1(\omega)=\omega$.
\end{remark}

%
%
\begin{theorem}\label{theominspecrepmaxexists}
Let $X=\{X(t), t\in T\}$ be a max-i.d. process satisfying Condition~\textup{S}.
There exists a minimal spectral representation of $X$ defined on
$[0,1]$ endowed with a $\sigma$-finite Borel measure.
\end{theorem}

We recall next several notions of isomorphisms from measure theory. For
more details, see, for example, Chapter~22 in \cite{sikorskibook}, page~167 in \cite{halmosbook}, or Chapter~15.4 in \cite{roydenbook}.

%
%
\begin{definition} \label{defisomorphisms}
\textup{(i)} An \emph{isomorphism} between two \emph{measurable spaces}
$(\Omega_i,{\BB}_i), i=1,2$, is a bijection $\Phi\dvtx \Omega_1 \to
\Omega
_2$ such that both $\Phi$ and $\Phi^{-1}$ are measurable.

\textup{(ii)} A measurable space $(\Omega,{\BB})$ is said to be a \emph{Borel space} if it is isomorphic (in the sense of part~\textup{(i)}) to a complete
separable metric space endowed with its Borel $\sigma$-algebra.

\textup{(iii)}
A Borel space endowed with a $\sigma$-finite measure will be called a
\textit{$\sigma$-finite Borel space}.

\textup{(iv)} An \emph{isomorphism (modulo null sets)} between two \emph{measure spaces} $(\Omega_i,{\BB}_i,\mu_i)$, $i=1,2$, is a bijection
$\Phi\dvtx \Omega_1\setminus A_1\to\Omega_2\setminus A_2$, where
$A_1\in
\BB_1$ and $A_1\in\BB_2$ are null sets, such that both $\Phi$ and
$\Phi
^{-1}$ are measurable and $\mu_1(A) = \mu_2(\Phi(A))$, for all
measurable $A\subset\Omega_1\setminus A_1$.
Two isomorphisms $\Phi, \Psi$ are considered as equal modulo null sets
if $\Phi(\omega)=\Psi(\omega)$ for $\mu_1$-a.a. $\omega\in
\Omega_1$.
\end{definition}

%
%
\begin{remark} Any Borel space is isomorphic to the interval $[0,1]$ endowed
with the Borel \mbox{$\sigma$-}algebra or to an at most countable set endowed
with the $\sigma$-algebra of all subsets. This result is
known as Kuratowski's theorem (see, e.g., page~406 in \cite{roydenbook}).
\end{remark}

The next statement is the main result in this section. It establishes
the uniqueness of the minimal spectral representation.

%
%
\begin{theorem}\label{theominspecrepmaxunique}
Let $X=\{X(t), t\in T\}$ be a max-i.d. process. Let also $\{
f^{(i)}_t, t\in T\}$ be two minimal spectral representations of $X$
defined on the spaces $(\Omega_i,\mathcal B_i,\mu_i)$, $i=1,2$.
\begin{enumerate}[(ii)]
\item[(i)] If $(\Omega_1,\mathcal B_1,\mu_1)$ is a $\sigma$-finite
Borel space, then
there is a measurable map $\Phi\dvtx \Omega_2 \to\Omega_1$ such that
$\mu_1 = \mu_2\circ\Phi^{-1}$ and for all $t\in T$,
%
%
\begin{equation}
\label{eqtheominspecrep} f_t^{(2)}(\omega)=f_t^{(1)}
\circ \Phi(\omega)\qquad\mbox{for } \mu _2\mbox{-a.a. } \omega\in
\Omega_2.
\end{equation}

\item[(ii)] If both $(\Omega_i,\BB_i,\mu_i), i=1,2$ are $\sigma
$-finite Borel spaces, then
the mapping $\Phi$ in part \textup{(i)} is a measure space isomorphism and it
is unique modulo null sets.
\end{enumerate}
\end{theorem}

%
%
\begin{example}\label{exnon-uniqueness}
Our definition of the space $\LL^\vee$ of integrands is more
restrictive than that of Balkema \textit{et~al.} \cite{balkemaetal93},
who allow measurable functions $f\dvtx \Omega\to\R$ with $\mu\{f > a\}
<\infty$, for \textit{some} $a\in\R$ (and do not assume that
$\operatorname{essinf}  X_t=0$).
With the definition used in~\cite{balkemaetal93} the uniqueness may
fail, even for minimal representations.
Indeed, let $\Omega_1=\Omega_2=\Z$ be endowed with the counting
measure. Set $T=\Z\cup\{*\}$ and define
\begin{eqnarray*}
f_t^{(1)}(\omega) &=& f_t^{(2)}(\omega)=
\ind_{\{t\}}(\omega)\qquad \mbox{if } t\neq*,
\\
f_*^{(1)}(\omega)&=&1,\qquad f_*^{(2)}(\omega) =\ind_{\{\omega>0\}}+\tfrac{1}2
\ind_{\{\omega\leq
0\}}. %
\end{eqnarray*}
One verifies readily that $\{f_t^{(i)}, t\in T\}$, $i=1,2$, are minimal
representations of the same \mbox{max-}i.d. process. However, there is no
bijection $\Phi\dvtx \Omega_1\to\Omega_2$ such that $f^{(1)}_*\circ\Phi
=f^{(2)}_*$. Note that $f_*^{(2)}\notin\LL^{\vee}$ and hence Theorem
\ref{theominspecrepmaxunique} does not apply. The constant $1/2$
in the definition of $f_*^{(2)}$ could in fact be replaced by any
$0<c<1$. This example shows why it is important to require that the
max-integrands $f$ in ${\mathcal L}^{\vee}$ satisfy
the condition $\mu\{f> a\}<\infty$ for \textit{all} $a>0$.
\end{example}

\subsection{Spectral representations of i.d. processes} \label{secid}

A process $\{X(t), t\in T\}$ is said to be infinitely divisible (i.d. or sum-i.d.) if for all $n\in\N$ it can be represented (in
distribution) as a sum of $n$ independent and identically distributed processes.

There is already a lot of literature on the spectral representations of
i.d. processes (see, e.g.,~\cite{maruyama70,roy07,rosinskirajput89}).
Our aim here is to study the minimality and the uniqueness of the
spectral representation. This is a key step which allows us to
extend the classification program pioneered by Rosi\'nski \cite
{rosinski95} in the stable case to the general i.d. context.

Let $(\Omega,\BB,\mu)$ be a $\sigma$-finite measure space. The
space of
integrands $\LL^+$ consists of all measurable
$f\dvtx \Omega\to\R$ such that
%
%
\begin{equation}
\label{eL+def} \int_{\Omega} \min\bigl\{\varepsilon, \bigl|f(
\omega)\bigr|^2\bigr\}\mu(\mathrm{d}\omega )<+\infty
\end{equation}
for some (or, equivalently, any) $\varepsilon>0$. Functions differing
on a set
of measure $0$ are identified. Observe that $\LL^+$ is a linear space
since $1\wedge(f+g)^2 \le2 (1\wedge f^2 + 1\wedge g^2)$.
Following Maruyama~\cite{maruyama70}, for $f\in\LL^+$ define the
stochastic integral
%
%
\begin{equation}
\label{eqdefidstochint} I(f)\equiv\int_\Omega^+ f\,\mathrm{d} \Pi_\mu:= \lim_{\varepsilon\to0+}  \biggl\{ \sum_{i\in J}
f(U_i)\ind_{\{|f(U_i)|>\varepsilon\}} - \int_{\{
|f|>\varepsilon\}} a(f)\,\mathrm{d} \mu
 \biggr\},
\end{equation}
where $\Pi_\mu=\{U_i, i\in J\}$ is a Poisson point process on
$(\Omega,\BB)$ with intensity $\mu$ and
%
%
\begin{equation}
\label{ea-def} a(u) =\cases{ u, &\quad $|u|\le1$,
\cr
1, &\quad $u>1$,
\cr
-1, &
\quad $u< -1$.}
\end{equation}
Note that the limit in~(\ref{eqdefidstochint}) exists in the a.s. sense by the convergence theorem for $L^2$-bounded martingales. For
$f_1,\ldots,f_n\in\LL^+$ the joint distribution of the $I(f_j)$'s is
characterized as follows. For all $\theta_1,\ldots,\theta_n\in\R$
we have
%
%
\begin{equation}
\label{ejoint-chf} \E \mathrm{e}^{\mathrm{i}\sum_{j=1}^n \theta_j I(f_j)} = \exp \Biggl\{ \int_{\Omega}
 \Biggl( \mathrm{e}^{\mathrm{i}\sum_{j=1}^n \theta_j f_j(\omega)} - 1 - \mathrm{i} \sum_{j=1}^n
\theta_j a\bigl(f_j(\omega)\bigr) \Biggr) \mu(\mathrm{d} \omega)  \Biggr\},
\end{equation}
where i stands for the imaginary unit. In particular, it is easy to verify that for all $f,g\in\LL^+$ and
$c\in
\R$ we have $I(f+g)=I(f)+I(g)+{\rm const}$ and
$I(cf)=cI(f) +{\rm const}$, that is, the functional $I$ is essentially
linear up to additive constants (see also
(\ref{egamma-def}) below).

%
%
\begin{definition}
Let $X=\{X(t), t\in T\}$ be an i.d. process with trivial Gaussian
component defined on some index set $T$.

\begin{enumerate}[(ii)]
\item[(i)] A
collection of functions $\{f_t, t\in T\}\subset\LL^+(\Omega, \BB,
\mu
)$ is a spectral representation of the process $X$ if
we have the following equality of laws:
%
%
\begin{equation}
\label{eqspecrepdefsum} \bigl\{X(t), t\in T\bigr\} \stackrel{d} {=} \biggl\{\int
^{+}_{\Omega} f_t \,\mathrm{d}\Pi
_{\mu}+c(t), t\in T \biggr\},
\end{equation}
where $\Pi_{\mu}$ is a Poisson point process on $(\Omega, \BB,\mu
)$ and
$c\dvtx T\to\R$ is some function.

\item[(ii)] The spectral representation is called \emph{minimal} if $\{
f_t, t\in T\}$ satisfy both conditions of Definition~\ref{defmin}.
\end{enumerate}
\end{definition}

%
%
\begin{theorem}\label{theominspecrepsumexists}
Let $\{X(t), t\in T\}$ be an i.d. process which has a trivial Gaussian
component and satisfies Condition~\textup{S}.
There exists a minimal spectral representation of $X$ defined on
$[0,1]$ endowed with a $\sigma$-finite Borel measure.
\end{theorem}

The proof of the above result (given in Section~\ref{secproofs} below) utilizes
the following truncated \mbox{$L^2$-}metric on the space $\LL^+$:
%
%
\begin{equation}
\label{ed-def} d(f,g) \equiv d(f-g):=  \biggl( \int_{\Omega} 1
\wedge(f-g)^2 \,\mathrm{d}\mu  \biggr)^{1/2},\qquad f,g\in\LL^+.
\end{equation}
Note that the triangle inequality follows from $1\wedge|f+g| \le
1\wedge|f| + 1\wedge|g|$ and the triangle inequality
in $L^2(\Omega)$.

%
\begin{proposition}\label{pLLpluscomplete}
For any $\sigma$-finite Borel space $(\Omega,{\BB},\mu)$, the space
$(\LL^+,d)$ is separable and complete.
\end{proposition}

The next proposition shows that the metric $d$ on the space $\LL^+$
corresponds to convergence in probability on the space of stochastic
integrals $\{I(f), f\in\LL^+\}$.

%
\begin{proposition}\label{pcauchyprobabcauchyLplus}
For $f_n \in\LL^+$ and $c_n\in\R$, we have that $I(f_n) + c_n$
converges to a random variable $\xi$ in probability, as $n\to\infty$,
if and only if, there exists some
$f\in\LL^+$ and $c\in\R$, such that $d(f_n-f) + |c_n-c| \to0$, as
$n\to\infty$. In this case, $\xi= I(f) + c$ a.s.
\end{proposition}

The next theorem, which is analogous to Theorem~\ref
{theominspecrepmaxunique}, shows the uniqueness of the minimal spectral
representation for i.d. processes.

%
%
\begin{theorem}\label{theominspecrepsumunique}
Let $X=\{X(t), t\in T\}$ be an i.d. process. Let also $\{f^{(i)}_t,
t\in T\}$ be two minimal spectral representations of $X$
defined on the spaces $(\Omega_i,\mathcal B_i,\mu_i)$, $i=1,2$.
\begin{enumerate}[(ii)]
\item[(i)] If $(\Omega_1,\mathcal B_1,\mu_1)$ is a $\sigma$-finite
Borel space, then
there is a measurable map $\Phi\dvtx \Omega_2 \to\Omega_1$ such that
$\mu_1 = \mu_2\circ\Phi^{-1}$ and for all $t\in T$,
%
%
\begin{equation}
\label{eqtheominspecrepsum} f_t^{(2)}(\omega)=f_t^{(1)}
\circ \Phi(\omega)\qquad\mbox{for } \mu _2\mbox{-a.a. } \omega\in
\Omega_2.
\end{equation}

\item[(ii)] If both $(\Omega_i,\BB_i,\mu_i), i=1,2$ are $\sigma
$-finite Borel spaces, then
the mapping $\Phi$ in part \textup{(i)} is a measure space isomorphism and it
is unique modulo null sets.
\end{enumerate}
\end{theorem}

%
%
\begin{remark} Theorems \ref{theominspecrepmaxunique} and \ref
{theominspecrepsumunique} require that
both representations be minimal. Minimality can be enforced by
replacing $\Omega_i$ by $\operatorname{supp}\{f_t^{(i)}, t\in T\}$, $i=1,2$, and
letting $\BB_i = \sigma\{f_t^{(i)}, t\in T\}$, $i=1,2$. To be able to
apply the above results, however, at least one of the measure spaces
should be Borel. This is neither automatic nor obvious for the new
spaces $(\Omega_i,\BB_i)$. If both spaces are Borel, then by part~\textup{(ii)}
of Theorems \ref{theominspecrepmaxunique} and~\ref
{theominspecrepsumunique} minimality ensures that the two representations
are related through a unique measure space isomorphism as in (\ref
{eqtheominspecrepsum}). In fact, by part~\textup{(i)} of Theorems \ref
{theominspecrepmaxunique} and~\ref{theominspecrepsumunique}, this
relation still holds
for some not necessarily invertible map, provided that just the first
representation is minimal. This last fact
is an important technical tool, analogous to Remark 2.5 in Rosi\'nski
\cite{rosinski95}, in the stable case.
\end{remark}

\subsection{Measurability and stochastic continuity}\label{secmeasver}

When studying path properties or ergodicity, it is important or in fact
necessary to work with measurable processes. Here, we establish
necessary and sufficient
conditions for the existence of measurable versions of max-i.d. and
i.d. processes in terms of their spectral representations.

Let $(T,\rho_T)$ be a separable metric space, equipped with its Borel
$\sigma$-algebra ${\mathcal A}$. Consider a family of measurable
functions $\{f_t, t\in T\}$ on $(\Omega,\BB,\mu)$. This family is said
to be \textit{jointly measurable} if the map $(t,\omega)\mapsto
f_t(\omega)$
is measurable with respect to the product $\sigma$-algebra ${\mathcal
A}\otimes\BB:= \sigma({\mathcal A}\times\BB)$.
For the classical notions of measurability and strong separability of a
stochastic process $X=\{X_t\}_{t\in T}$, we refer to Chapter~9 in \cite
{samorodnitskytaqqu1994book}.
The following result extends Proposition 4.1 in \cite{wangstoev10} (see
also Theorem 11.1.1 in \cite{samorodnitskytaqqu1994book}).
Its proof is given in \cite{KabluchkoStoev2014-arxiv}.

%
%
\begin{proposition}\label{pmeasurability} Let $X=\{X(t), t\in T\}$ be
a max-i.d. (i.d., resp.) process with spectral representation
$\{f_t, t\in T\}\subset\LL^{\vee/+}(\Omega,\BB,\mu)$ over a
$\sigma
$-finite measure space $(\Omega,\BB,\mu)$ as in (\ref{eqspecrepdef})
(as in~(\ref{eqspecrepdefsum}), resp.). The process $X$ has a
measurable modification if and only if the following two conditions hold:
\begin{enumerate}[(ii)]
\item[(i)] The family $\{f_t, t\in T\}$ has a jointly measurable
modification, that is, there exists a \mbox{${\mathcal A}\otimes\BB$-}measurable mapping
$(t,\omega
)\mapsto g_t(\omega)$, such that $f_t = g_t$ ($\mod \mu$), for all
$t\in T$.
\item[(ii)] The function $t\mapsto c(t)$ is measurable if $X$ is i.d.
\end{enumerate}
If the process $X$ has a measurable modification, then it satisfies
Condition~\textup{S}, and consequently, $X$ has a spectral representation
over a $\sigma$-finite Borel space. In this case, the measurable
version of $X$ and the jointly measurable version of $\{f_t, t\in T\}$
may be taken to be strongly separable.
\end{proposition}

%
%
\begin{remark} The last result shows that, if $X$ has a measurable
version, then this version as well as its corresponding jointly measurable
representation $\{f_t\}_{t\in T}$ can be taken to be strongly separable
(cf. Chapter~9 in \cite{samorodnitskytaqqu1994book}).
\end{remark}

%
%
\begin{remark} \label{remmeasstat} Proposition 3.1 in~\cite{roy2010}
states that any \emph{measurable} and \emph{stationary} random field $\{
X(t), t\in\R^d\}$ is
automatically continuous in probability.

This follows from a celebrated result due to Banach on Polish groups.
Conversely, it is well known that stochastically continuous processes
(indexed by separable metric spaces) have measurable modifications.
Therefore, in the case
of stationary random fields on $\R^d$ the assumptions of stochastic
continuity and measurability are essentially equivalent.
\end{remark}

\section{Flow representations and ergodic decompositions for stationary i.d. and max-i.d. processes} \label{secflow}

In the stable and max-stable cases, the connections between spectral
representations and ergodic theoretic decompositions of
the underlying flows have lead to a wealth of decomposition and
classification results. We will show that this theory naturally
extends to the i.d. and max-i.d. setting. An alternative powerful
approach from the perspective of \emph{Poisson suspensions} and \emph{factor maps} has
been recently pioneered by Emmanuel Roy~\cite{roy07,roymaharam}. We
expect that these tools can be used to develop an all-encompassing
theory, but
this is beyond the scope and goals of the present work.
Here, we adopt an alternative approach, which is useful when the i.d. and max-i.d. processes are given through their stochastic integral
representations.

\subsection{Existence and uniqueness of flow representations}
Let $\T$ denote either $\Z$ or $\R$. Consider the measure space $(\T
^d,\mathcal A,\lambda)$, where $\lambda$ is either the counting
measure if
$\T=\Z$ or the Lebesgue measure with
${\mathcal A}$ the Borel $\sigma$-algebra if $\T=\R$. A
measure-preserving $\T^d$-action (or flow) on a\vspace*{1pt} measure space $(\Omega, \BB, \mu)$ is a family $\{T_t\}_{t\in\T^d}$ of measure space
isomorphisms $T_t\dvtx \Omega\to\Omega$ such that $T_0=\mbox{id}$ $\mu
$-a.e. and for every $t,s\in\R^d$, $T_t\circ T_s = T_{t+s}$ $\mu
$-a.e. The action is called \emph{measurable} if
$(t, \omega)\mapsto T_t(\omega)$ is a measurable map from $\T
^d\times
\Omega$ to~$\Omega$, where the former space is endowed with the product
$\sigma$-algebra
${\mathcal A}\otimes\BB$.

The next statement combines both the i.d. and max-i.d. cases and
shows that one can associate stationary processes with
measure-preserving actions. The common theme is the
uniqueness.

%
%
\begin{theorem}\label{theoflowrepexists}
Let $X=\{X(t), t\in\T^d\}$ be a stationary and stochastically
continuous max-i.d. (resp., i.d., without Gaussian component)
process with a representation
$\{f_t, t\in T\} \subset\LL^{\vee/+}(\Omega,\BB,\mu)$ as in
(\ref{eqspecrepdef}) (or (\ref{eqspecrepdefsum}), resp.).
If the representation is minimal and the measure space $(\Omega,\BB,\mu
)$ is $\sigma$-finite Borel, then there exists a measurable and
measure-preserving flow
$\{T_t\}_{t\in\T^d}$ on $(\Omega,\BB,\mu)$ such that for all $t\in
\T
^d$, we have
%
%
\begin{equation}
\label{eqspecrepstat} f_t = f_0 \circ T_t,\qquad\mu \mbox{-a.e.}
\end{equation}
In the i.d. case the function $c(t)$ in~(\ref{eqspecrepdefsum}) is constant.
\end{theorem}

\begin{pf} By stationarity, for every fixed $s\in\T^d$, both $\{f_t, t\in\T^d\}$ and
$\{f_{t+s}, t\in\T^d \}$ are minimal spectral representations of $X$
defined over the same $\sigma$-finite Borel space. By
Theorems~\ref{theominspecrepmaxunique}\textup{(ii)} and \ref{theominspecrepsumunique}\textup{(ii)}, there is a modulo $\mu$ unique
automorphism $T_s$ of the measure space $(\Omega, \mathcal B, \mu)$
such that for every $t\in\T^d$, $f_{s+t}=f_t\circ T_s$, $\mu$-a.e.
Let us show that for every $s_1,s_2\in\T^d$,
$T_{s_1+s_2}=T_{s_1}\circ
T_{s_2}$, $\mu$-a.e. Indeed, we have for every $t\in\T^d$,
\[
f_t\circ T_{s_1+s_2}=f_{s_1+s_2+t}=f_t
\circ(T_{s_1}\circ T_{s_2}),\qquad\mu\mbox{-a.e.}
\]
By the uniqueness of the automorphisms, we have
$T_{s_1+s_2}=T_{s_1}\circ T_{s_2}$, $\mu$-a.e., which yields~(\ref
{eqspecrepstat}).
In the sum-i.d. case, note also that the term $c(t)$ appearing
in~(\ref{eqspecrepdefsum}) does not depend on $t\in\T^d$ by stationarity.

This completes the proof in the case $\T=\Z$. In the case $\T=\R$
the flow $\{T_t\}_{t\in\R^d}$ constructed in this way need not in
general be measurable
(see, e.g., Example \ref{examplenon-measuarable}, below). However, one
can argue as
in~\cite{rosinski95}, by using the works of Mackey \cite{mackey62} and
Sikorski~\cite{sikorskibook}, that each $T_t$
can be modified on a set of $\mu$-measure zero so that the flow
property is valid with probability one and the flow
becomes measurable. Indeed, observe first that by Proposition \ref
{pmeasurability}, we may assume that the
representation $\{f_t, t\in\R^d\}$ is jointly measurable. Now,
consider the Boolean $\sigma$-algebra ${\mathcal B}_\mu$
whose elements are equivalence classes $[B]$ of sets $B\in{\mathcal B}$
with respect to the equality modulo $\mu$-null sets. Following
the argument on page 1168 of Rosi\'nski~\cite{rosinski95}, in order to
apply Theorem 1 of \cite{mackey62}, it is enough
to show that for every finite measure $\widetilde\nu$ on the Boolean
$\sigma$-algebra ${\mathcal B}_\mu$ and for every set $B\in
{\mathcal B}$ the function
\[
t\mapsto\widetilde\nu\bigl( \bigl[T_t(B)\bigr]\bigr) %
\]
is Borel measurable. For the finite measure $\nu$ on $(\Omega,{\mathcal B})$,
induced by $\widetilde\nu$ as $\nu(B):= \widetilde\nu([B])$, we have
$\nu\ll\mu$ and hence
%
%
\begin{equation}
\label{enu-as-indicator} \widetilde\nu\bigl(\bigl[T_t(B)\bigr]\bigr) = \int
_{\Omega} (\ind_{B}\circ T_{-t}) (\omega ) k(
\omega) \mu(\mathrm{d}\omega),
\end{equation}
where $k = \mathrm{d}\nu/\mathrm{d}\mu\in L^1(\Omega, {\mathcal B},\mu)$ is the
Radon--Nikodym density.

By minimality of the representation $\{f_t, t\in\R^d\}$, we have that
there is a set $A\in{\mathcal B}$ such that $\mu(A\Delta B)=0$ and
$A \in
\sigma\{f_{t_i}, i\in\bbN\}$, for some countable collection $\{
t_i, i\in\N\} \subset\R^d$ and therefore,
there exists a Borel function $g\dvtx \R^{\N}\to\R$, such that $\ind_A =
g(f_{t_1},f_{t_2},\ldots)$. We will show that
the integral in (\ref{enu-as-indicator}) is a Borel measurable function
of $t$. Since $f_{t_i-t} = f_{t_i}\circ T_{-t}
\mod\mu$, we have that $\ind_A \circ T_{-t} =
g(f_{t_1-t},f_{t_2-t},\ldots)\mod\mu$, for every $t\in\R^d$.
Hence, we have
%
%
\begin{equation}
\label{eB-via-g} \widetilde\nu\bigl(\bigl[T_t(B)\bigr]\bigr) = \int
_{\Omega} g\bigl(f_{t_1-t}(\omega ),f_{t_2-t}(
\omega),\ldots\bigr) k(\omega) \mu(\mathrm{d}\omega)\qquad\mbox {for all } t\in
\R^d.
\end{equation}
Now, the joint measurability of $\{f_t, t\in\R^d\}$ implies the joint
measurability of the integrand on the right-hand side of (\ref
{eB-via-g}) as a function of $t$ and $\omega$.
Hence, by Fubini's theorem, applied to (\ref{eB-via-g}), we obtain that
$t\mapsto\widetilde\nu([T_t(B)])$ is Borel measurable. Now,
proceeding as on page~1169 of~\cite{rosinski95}, by using
Theorem~1 of~\cite{mackey62} and Theorem~32.5 of~\cite{sikorskibook},
we obtain that the flow $\{T_t, t\in\R^d\}$ has
a jointly measurable modification.
\end{pf}

Theorem \ref{theoflowrepexists}, as in the stable case (cf.~\cite
{rosinski95}) motivates the following.

%
%
\begin{definition}\label{defgen-by-flow} A stationary max-i.d. or
i.d. process (with trivial Gaussian part) $X$ with spectral
representation $\{f_t, t\in\T^d\} \subset\LL^{\vee/+}(\Omega,\BB,\mu
)$ as in~(\ref{eqspecrepdef}) or
(\ref{eqspecrepdefsum}), is said to be generated by a
measure-preserving flow $\{T_t\}_{t\in\T^d}$ on $(\Omega,\BB,\mu)$ if:
\begin{enumerate}[(ii)]
\item[(i)] For every $t\in\T^d$, $f_t = f_0 \circ T_t$ $\mu$-a.e.
\item[(ii)] $\{f_t, t\in\T^d\}$ has full support.
\end{enumerate}
In this case, we call the pair $(f_0, \{T_t\}_{t\in\R^d})$ a flow
representation of $X$ on $(\Omega, \BB,\mu)$.
Furthermore, if $\{f_t, t\in\T^d\}$ is minimal, then we say that the
flow representation is minimal.
\end{definition}

%
%
\begin{remark}
According to the above definition, a flow representation need not be
minimal. The reason why we consider more general non-minimal flow
representations is because
minimality may not be easy to check or ensure in applications.
\end{remark}

The next corollary follows immediately from Theorems~\ref
{theominspecrepmaxexists}, \ref{theominspecrepsumexists},
and \ref{theoflowrepexists}.

%
%
\begin{corollary}
Let $\{X(t), t\in\T^d\}$ be a stationary max-i.d. or i.d. (without
Gaussian component) process which is stochastically continuous
(equivalently: has a
measurable modification). Then, $X$ has a minimal representation by a
measurable flow on a $\sigma$-finite Borel space.
\end{corollary}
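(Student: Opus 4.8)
The plan is to assemble the three cited results; the corollary is essentially a bookkeeping statement matching the hypotheses of Theorem~\ref{theo:flow_rep_exists} to the output of Theorems~\ref{theo:min_spec_rep_max_exists} and~\ref{theo:min_spec_rep_sum_exists}. The only substantive point to check is that stationarity together with stochastic continuity produces a minimal spectral representation over a $\sigma$--finite Borel space, after which the flow is supplied directly.

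First I would verify that $X$ satisfies Condition~S. When $\T=\Z$ this is immediate, since $\T^d=\Z^d$ is countable and one takes $T_0=\Z^d$. When $\T=\R$, the space $\T^d=\R^d$ is separable, hence admits a countable dense subset $T_0$; for any $t\in\R^d$ one selects $t_n\in T_0$ with $t_n\to t$, and stochastic continuity yields $X(t_n)\to X(t)$ in probability, which is precisely Condition~S. The parenthetical equivalence between stochastic continuity and the existence of a measurable modification for stationary fields on $\R^d$ is the content of Remark~\ref{rem:meas_stat}, so either formulation of the hypothesis may be used interchangeably.

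Having established Condition~S, I would invoke Theorem~\ref{theo:min_spec_rep_max_exists} in the max--i.d.\ case, and Theorem~\ref{theo:min_spec_rep_sum_exists} in the i.d.\ case (where the triviality of the Gaussian component is part of the hypothesis), to obtain a minimal spectral representation $\{f_t,\ t\in\T^d\}$ contained in $\LL^\vee([0,1],\BB,\mu)$, respectively $\LL^+([0,1],\BB,\mu)$, on the interval $[0,1]$ endowed with a $\sigma$--finite Borel measure. Since $[0,1]$ with such a measure is a $\sigma$--finite Borel space, every standing hypothesis of Theorem~\ref{theo:flow_rep_exists} is now in force: $X$ is stationary and stochastically continuous, the representation is minimal, and the underlying space is $\sigma$--finite Borel.

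Applying Theorem~\ref{theo:flow_rep_exists} then produces a measurable, measure--preserving flow $\{T_t\}_{t\in\T^d}$ on $([0,1],\BB,\mu)$ with $f_t=f_0\circ T_t$ $\mu$--a.e.\ for every $t$ (and, in the i.d.\ case, the location function $c(t)$ is constant). Because a minimal representation has full support by Definition~\ref{def:min}(ii), both conditions of Definition~\ref{def:gen-by-flow} hold, so the pair $(f_0,\{T_t\})$ is a minimal flow representation by a measurable flow on a $\sigma$--finite Borel space, which is the assertion. I do not expect any genuine obstacle: the analytic content has already been discharged inside the three cited theorems, and the only mild verification is the passage from stochastic continuity to Condition~S via a countable dense set.
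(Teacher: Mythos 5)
Your proposal is correct and follows exactly the paper's route: the paper derives this corollary immediately from Theorems~\ref{theo:min_spec_rep_max_exists}, \ref{theo:min_spec_rep_sum_exists}, and~\ref{theo:flow_rep_exists}, which is precisely the chain you assemble (Condition~S from stochastic continuity via a countable dense set, existence of a minimal representation on $[0,1]$ with a $\sigma$--finite Borel measure, then the flow theorem, with full support supplied by minimality). Your write-up simply makes explicit the bookkeeping the paper leaves implicit.
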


The next result shows that the minimal flow representation associated
with a stationary stochastically continuous max-i.d. or i.d. process
is essentially unique
up to a \emph{flow isomorphism}. This fact will allow us to obtain
structural results about the above two types of processes from ergodic
theoretic properties of the
associated flows.

%
%
\begin{theorem}\label{theoflowrepunique}
Let $X=\{X(t), t\in\T^d\}$ be a stationary max-i.d. or i.d. (without
Gaussian component) random field. If $\T=\R$, suppose\vspace*{1pt} in addition that
$X$ is stochastically continuous.
If $(f_0^{(i)},\{T_t^{(i)}\}_{t\in\T^d})$ are two minimal flow
representations of $X$ on $\sigma$-finite Borel
spaces $(\Omega_i,\BB_i,\mu_i)$, $i=1,2$, then there is a measure space
isomorphism $\Phi\dvtx \Omega_1\to\Omega_2$ (defined modulo null sets) such
that $f_0^{(1)}=f_0^{(2)}\circ\Phi$,
$\mu_1$-a.e., and for all $t\in\T^d$,
%
%
\begin{equation}
\label{eqtheoflowrepunique} \Phi\circ T_t^{(1)} = T_t^{(2)}
\circ\Phi,\qquad \mu_1\mbox{-a.e.}
\end{equation}
The isomorphism $\Phi$ is unique modulo null sets.
\end{theorem}

\begin{pf}
By assumption, $\{f_0^{(i)} \circ T_t^{(i)}, t\in\T^d\}$, $i=1,2$, are
two minimal spectral representations of $X$. By the uniqueness of the
minimal spectral representations over
Borel spaces, there is a (modulo null sets) unique measure space
isomorphism $\Phi\dvtx \Omega_1\to\Omega_2$ such that for every $t\in\T^d$,
%
%
\begin{equation}
\label{eqflowunique1} \bigl(f_0^{(1)}\circ T_t^{(1)}
\bigr) = \bigl(f_0^{(2)}\circ T_t^{(2)}
\bigr) \circ\Phi,\qquad \mu_1\mbox{-a.e.}
\end{equation}
Replacing $t$ by $t+s$ and taking the composition of both sides with
$T_{-s}^{(1)}$ from the right, we obtain
%
%
\begin{equation}
\label{eqflowunique2} \bigl(f_0^{(1)}\circ T_t^{(1)}
\bigr) = \bigl(f_0^{(2)}\circ T_t^{(2)}
\bigr) \circ \bigl(T_s^{(2)} \circ\Phi\circ
T_{-s}^{(1)}\bigr),\qquad \mu_1\mbox{-a.e.}
\end{equation}
It follows from~(\ref{eqflowunique1}) and~(\ref{eqflowunique2}) that
$\Phi$ and
$T_s^{(2)}\circ\Phi\circ T_{-s}^{(1)}$ are two measure space
isomorphisms each linking the representations
$\{f_0^{(i)} \circ T_t^{(i)}, t\in\T^d\}$, $i=1,2$. By the last
statement of Theorem~\ref{theominspecrepmaxunique}
or Theorem~\ref{theominspecrepsumunique}, these isomorphisms should be
equal up to $\mu_1$-zero sets.
This yields~(\ref{eqtheoflowrepunique}).
\end{pf}

The following example shows that the stochastic continuity of the
process $X$ is an essential assumption for the
measurability of the flow in Theorem~\ref{theoflowrepexists}.

%
%
\begin{example} \label{examplenon-measuarable}
Take $\Omega=\R$, let $\BB$ be the Borel $\sigma$-algebra and $\mu
=\lambda$ the Lebesgue measure.
Take any function $f_0\in\LL^{\vee}(\R, \mathcal B, \lambda)$, for
concreteness let $f_0(\omega)=\mathrm{e}^{-\omega}\ind_{\omega>0}$. We now
construct a measure-preserving flow on $(\R, {\mathcal B}, \lambda)$ with
``bad'' properties. Let $\varphi\dvtx \R\to\R$ be a Hamel
function, that is a \textit{non-measurable} function which satisfies
the Cauchy functional equation
$\varphi(t+s)=\varphi(t)+\varphi(s)$ for all $t,s\in\R$. Define a map
$T_t\dvtx \R\to\R$ by $T_t(\omega)=\omega-\varphi(t)$, for $s,t\in\R
$. It is
easy to check that $\{T_t\}_{t\in\R}$ is a measure-preserving (but not
measurable) flow on $(\R, \mathcal B, \lambda)$. Consider now a
max-i.d. process $\{X(t), t\in\R\}$ defined by $X(t)=I(f_t)$, where
\[
f_t(\omega) = (f_0\circ T_t) (\omega) =
\mathrm{e}^{-(\omega-\varphi(t))}\ind _{\omega>\varphi(t)}. %
\]

The process $X$ defined above is a stationary max-i.d. process which
satisfies Condition~\textup{S}. To see that Condition~\textup{S} is satisfied note that
$\varphi(t)=ct$ for all $t\in\Q$ and some constant $c=\varphi(1)$.
Hence, the collection $\{X(t), t\in\Q\}$ is dense in probability in
$\{
X(t), t\in\R\}$. The spectral representation $\{f_t, t\in\R\}$
constructed above is minimal and it is defined on a $\sigma$-finite
Borel space. Note that the minimality follows from the fact that the
$\sigma$-algebra generated by the functions $\mathrm{e}^{-(x-ct)}\ind_{x>ct}$,
$t\in\Q$, coincides with $\mathcal B$. This representation is
generated by a measure-preserving flow $\{T_t\}_{t\in\R}$. On the
other hand, the process $X$ is not stochastically continuous
(otherwise, the function $\varphi$ would be continuous and hence,
linear). By Remark~\ref{remmeasstat}, the process $X$ has no
jointly measurable modification. Consequently, by Proposition~\ref
{pmeasurability}, the representation
$\{f_t, t\in\R\}$ has no jointly measurable modification and the
process $X$ has no representation generated by a measurable
measure-preserving flow.
\end{example}

\subsection{Conservative--dissipative decompositions} \label{secHopf}

Let $\{T_t\}_{t\in\T^d}$, with $\T=\Z$ or $\R$, be a
measure-preserving and measurable $\T^d$-action on
a $\sigma$-finite Borel space $(\Omega,\BB,\mu)$. Suppose first that
$\T= \Z$ and $d=1$. Recall that a set $W\in\BB$ is said to be
wandering if $T_n(W)$, $n\in\Z$, are
disjoint modulo $\mu$. If $\Omega= \bigcup_{t\in\Z} T_n(W) \mod\mu
$, for
some (maximal) wandering set $W$, then the flow is said to be dissipative.
Conversely, a flow $\{T_n, n\in\Z\}$ is said to be conservative if it
has no wandering sets of positive measure.
In general, a flow may be neither purely conservative nor dissipative.
The Hopf decomposition entails that $\Omega= C \cup D$,
where $C\cap D =\varnothing$ and $C$ and $D$ are two $T_1$-invariant sets
such that the restriction of $T_1$ is conservative on $C$ and dissipative
on $D$. In the continuous-parameter case $\T= \R$ and $d=1$, one can
show that the Hopf decompositions $\Omega= C_t\cup D_t$ corresponding
to the measure
preserving map $T_{t}$ do not depend on $t\in\R\setminus\{0\}$, modulo
$\mu$ (cf.~\cite{rosinski95,krengel68,krengel1985}). Furthermore, a
celebrated result due to Krengel implies
that $\{T_t, t\in\R\}$ is dissipative if and only if it is isomorphic
to a mixture of Lebesgue shifts, that is, $T_t\circ\Phi(s,v) = \Phi
(s+t,v)$ for a measure
space isomorphism $\Phi\dvtx  (\T\times V, \mathcal A\otimes{\mathcal
V},\lambda\,\mathrm{d}\nu) \to(\Omega,\BB,\mu)$.

The multi-parameter case $d\ge2$ is more delicate since it is not
obvious how to even define the conservative/dissipative component of
the flow.
In a series of works Rosi\'nski, Samorodnitsky and Parthanil Roy \cite
{rosinski2,roysamorodnitsky2008,roy2010}
have shown that the Hopf decomposition and Krengel's characterization
of dissipativity extend to the multi-parameter setting with $\T$
discrete and/or continuous. Here, we shall adopt the approach of
Parthanil Roy \cite{roy2010} and say that the conservative
(dissipative) component of the flow
$\{T_t, t\in\R^d\}$ is that of the discrete skeleton $\{T_\gamma, \gamma\in\Z^d\}$ (see Proposition~2.1 therein). The flow is said to be
conservative (dissipative, resp.) if its dissipative (conservative, resp.) component is trivial. The
following characterization result may be taken as a definition of the
Hopf decomposition of a
$\T^d$-action.

%
%
\begin{theorem}[(Corollary 2.2 in \cite{roy2010})] \label{teoRoy-C-test}
Let $\{T_t, t\in\T^d\}$ be a measure-preserving and measurable flow.
Let also $h\in L^1(\Omega,\BB,\mu)$ be positive $\mu$-a.e. Then the
conservative part of $\{T_t\}_{t\in\T^d}$ is modulo $\mu$ equal to:
\[
C:=  \biggl\{ \omega\in\Omega\dvtx  \int_{\T^d} h
\bigl(T_t(\omega)\bigr) \lambda (\mathrm{d}t) = \infty \biggr\}. %
\]
And for the dissipative component, we have $D = \Omega\setminus C$.
\end{theorem}

%
%
\begin{definition}
Let $X = \{X_t, t\in\T^d\}$ be a measurable stationary max-i.d. or
i.d. random field (without Gaussian part) generated by a measurable
flow $T=\{T_t, t\in\T^d\}$ on $(\Omega,\BB,\mu)$ in the sense of
Definition \ref{defgen-by-flow}. We shall say that $X$ is generated by
a conservative
(dissipative) flow if $\{T_t, t\in\T^d\}$ is conservative (dissipative).
\end{definition}

The following result shows that this definition does not depend on the
choice of the flow representation. It provides, moreover,
a useful integral test for identifying the conservative and dissipative
parts of a flow. The situation is conceptually similar to the stable
and max-stable
cases (Corollary 4.2 in \cite{rosinski95}, Proposition 3.2 in \cite
{roy2010}, or Theorem 5.2 in \cite{wangstoev10}).

%
%
\begin{theorem}\label{teoC-int-test} Consider a stationary
max-i.d.~(or i.d.) process $X$ with a jointly measurable spectral
representation $\{f_t, t\in\T^d\}$ of full support.
The process $X$ is generated by a conservative flow, if and only if,
for every (equivalently any) nonnegative Borel function $\psi\dvtx [0,\infty
)\to[0,\infty)$ such
that $\psi(x)>0$ for all $x>0$, and $\int_\Omega\psi(|f_0|) \,\mathrm{d}\mu
<\infty$, we have
%
%
\begin{equation}
\label{eC-test} \int_{\T^d} \psi\bigl( \bigl|f_t(
\omega)\bigr| \bigr) \lambda(\mathrm{d}t) = \infty\qquad \mbox{for } \mu\mbox{-a.e. } \omega.
\end{equation}
Conversely, the process $X$ is generated by a dissipative flow, if the
latter integral is finite $\mu$-a.e. for some (equivalently, every)
$\psi$ such that
$\psi(x)>0$ for all $x>0$ and $\int_\Omega\psi(|f_0|) \,\mathrm{d}\mu<\infty$.
\end{theorem}

Consider a max-i.d. or an i.d. process $X= \{X(t), t\in\T^d\}$
with a measurable spectral representation of full support.
Motivated by Theorem \ref{teoC-int-test}, let
%
%
\begin{equation}
\label{eC-D-def} C =  \biggl\{ \omega\in\Omega\dvtx  \int_{\T^d}
\psi\bigl(\bigl|f_t(\omega)\bigr|\bigr) \lambda(\mathrm{d}t) = \infty \biggr\}\quad\mbox{and}\quad D:= \Omega \setminus C.
\end{equation}
By restricting the spectral representation to the sets $C$ and $D$, we
obtain the following decomposition of
$X$ into a max/sum of two independent processes:
%
%
\begin{equation}
\label{eC-D} X \stackrel{d} {=} X_C \,\Box\, X_D,\qquad \Box\in
\{\vee,+\},
\end{equation}
where $X_C(t):= I^{\vee/+}(\ind_C f_t) + c$ and $X_D(t):= I^{\vee
/+}(\ind_D f_t) + c$, $t\in\T^d$, where $c=0$ in the max-i.d. case.

By relation (\ref{ef-via-g}), as in the proof of Theorem \ref
{teoC-int-test}, it follows that $X_C$ and $X_D$ are \emph{stationary} and
generated by conservative and dissipative flows, respectively. The next
result shows that this decomposition is unique.

%
%
\begin{corollary} The conservative/dissipative decomposition (\ref
{eC-D}) is unique in law.
\end{corollary}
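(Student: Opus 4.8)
The plan is to show that the law of the pair $(X_C,X_D)$ does not depend on the chosen jointly measurable full-support representation $\{f_t\}$, and moreover that \emph{any} decomposition of $X$ into a $\Box$ of two independent stationary processes, one generated by a conservative and the other by a dissipative flow, coincides in law with $(X_C,X_D)$. The reference object is the canonical minimal spectral representation $\{h_t,\ t\in\T^d\}$ of $X$ on a $\sigma$--finite Borel space $(\Omega_0,\BB_0,\mu_0)$, which exists by Theorems~\ref{theo:min_spec_rep_max_exists} and~\ref{theo:min_spec_rep_sum_exists} and may be taken jointly measurable by Proposition~\ref{p:measurability}. Let $C_0,D_0\subset\Omega_0$ be the conservative/dissipative sets defined for $\{h_t\}$ as in~\eqref{e:C-D-def}, giving the canonical decomposition $X\eqdistr X_{C_0}\ \Box\ X_{D_0}$.

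First I would compare an arbitrary jointly measurable full-support representation $\{f_t\}$ on $(\Omega,\BB,\mu)$ to $\{h_t\}$. Since $\{h_t\}$ is minimal and defined on a Borel space, the strengthening of part~(i) of Theorems~\ref{theo:min_spec_rep_max_unique} and~\ref{theo:min_spec_rep_sum_unique} recorded in the Remark following Theorem~\ref{theo:min_spec_rep_sum_unique} (where only the first representation need be minimal, here the canonical Borel one) furnishes a measurable map $\Phi:\Omega\to\Omega_0$ with $\mu_0=\mu\circ\Phi^{-1}$ and $f_t=h_t\circ\Phi$ $\mu$--a.e.,\ for every $t$. Using joint measurability of both families together with the measurability of $\Phi$, the map $(t,\omega)\mapsto h_t(\Phi(\omega))$ is jointly measurable and agrees with $f_t(\omega)$ for $\lambda\otimes\mu$--a.e.\ $(t,\omega)$, so by Fubini, for $\mu$--a.e.\ $\omega$,
\[
\int_{\T^d}\psi(|f_t(\omega)|)\,\lambda(dt)=\int_{\T^d}\psi(|h_t(\Phi(\omega))|)\,\lambda(dt).
\]
Consequently the conservative set of $\{f_t\}$ satisfies $C=\Phi^{-1}(C_0)$ mod $\mu$ and $\ind_C f_t=(\ind_{C_0}h_t)\circ\Phi$. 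Since $\Phi$ pushes $\mu$ forward to $\mu_0$, the mapping theorem for Poisson processes gives $\Phi(\Pi_\mu)\eqdistr\Pi_{\mu_0}$, whence $\{X_C(t)\}=\{I((\ind_{C_0}h_t)\circ\Phi)\}\eqdistr\{I(\ind_{C_0}h_t)\}=\{X_{C_0}(t)\}$, and likewise $X_D\eqdistr X_{D_0}$; in the sum--i.d.\ case the compensator integrals in~\eqref{eq:def_id_stoch_int} transform correctly as well because $\mu_0=\mu\circ\Phi^{-1}$.

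To pass from marginals to joint laws I would note that in every full-support representation the components $X_C$ and $X_D$ are driven by the restrictions of the \emph{same} Poisson process to the disjoint sets $C$ and $D$, hence are independent; the same holds for $(X_{C_0},X_{D_0})$. As independent pairs with matching marginals have matching joint laws, $(X_C,X_D)\eqdistr(X_{C_0},X_{D_0})$, so the decomposition is independent of the representation. Finally, any abstract decomposition $X\eqdistr Y_C\ \Box\ Y_D$ into independent stationary components generated by a conservative flow $T^{(1)}$ and a dissipative flow $T^{(2)}$ can be realized by the full-support representation obtained by placing the two flow representations on the disjoint union of their underlying spaces; by Theorem~\ref{thm:C-int-test} (equivalently Theorem~\ref{thm:Roy-C-test}) the conservative/dissipative split of this combined representation is exactly the partition into the two summand spaces, so its induced decomposition~\eqref{e:C-D} is $(Y_C,Y_D)$. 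Applying the previous paragraph to this representation yields $(Y_C,Y_D)\eqdistr(X_{C_0},X_{D_0})\eqdistr(X_C,X_D)$, which is the claimed uniqueness.

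The step I expect to require the most care is the transfer of the conservative set under the non-invertible comparison map $\Phi$: the identity $f_t=h_t\circ\Phi$ holds only for each fixed $t$ off a $\mu$--null set, and promoting it to an almost-everywhere identity of the orbit integrals $\int_{\T^d}\psi(|f_t|)\,\lambda(dt)$, and hence to $C=\Phi^{-1}(C_0)$ mod $\mu$, relies essentially on joint measurability and Fubini, exactly as in the proof of Theorem~\ref{thm:C-int-test}. The availability of $\Phi$ for a merely full-support (non-minimal) second representation is precisely the technical tool isolated in the Remark following Theorem~\ref{theo:min_spec_rep_sum_unique}, and it is what makes the comparison to the canonical minimal Borel representation possible.
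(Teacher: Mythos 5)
Your proof is correct and follows essentially the same route as the paper's: both compare an arbitrary jointly measurable full-support representation to the canonical minimal representation on a $\sigma$--finite Borel space via the non-invertible map $\Phi$ supplied by part (i) of Theorems~\ref{theo:min_spec_rep_max_unique} and~\ref{theo:min_spec_rep_sum_unique}, pull back the conservative set as $C=\Phi^{-1}(C_0)$ mod $\mu$ using joint measurability and Fubini (exactly as in the proof of Theorem~\ref{thm:C-int-test}), and conclude equality in law of the restricted components from $\mu_0=\mu\circ\Phi^{-1}$. Your explicit independence argument for the joint law and your final paragraph on arbitrary abstract decompositions into independent conservative/dissipative components go somewhat beyond what the paper records, but the core argument is identical.
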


\begin{pf} Let $\{f_t^{(i)}, t\in\T^d\}$ be two measurable
representations of $X$ of full support defined on $(\Omega_i,\BB
_i,\mu
_i), i=1,2$.
As in the proof of Theorem \ref{teoC-int-test}, there exist
measure-preserving $\Phi_i\dvtx \Omega_i\to\widetilde \Omega, i=1,2$, such
that for all
$t\in\T^d$, we have $f_t^{(i)}(\omega)=g_t(\Phi_i(\omega))$,
modulo $\mu
_i$ $(= \widetilde \mu\circ\Phi_i^{-1}), i=1,2$, where $g_t=
g_0\circ
T_t, t\in T$
is a measurable\vspace*{1pt} minimal spectral representation over the Borel $\sigma
$-finite space $(\widetilde \Omega,\widetilde \BB,\widetilde \mu
)$. Let $C_i$
and $D_i$
be defined\vspace*{1pt} as in (\ref{eC-D-def}) with $f$ replaced by $f^{(i)}, i=1,2$. Then, as argued in the proof of Theorem \ref{teoC-int-test} (cf.
(\ref{ef-via-g})),
we have that $C_i = \Phi_i^{-1}(\widetilde C)$, modulo~$\mu_i$, where
$\widetilde C$ is the conservative part of the flow $\{T_t, t\in\T
^d\}$.
This fact since the
$\Phi_i$'s are  measure preserving implies that $\{\ind_{\widetilde C}
g_t, t\in\T^d\}$ is a spectral representation for both $X_{C_1}$ and $X_{C_2}$,
where $X_{C_i}(t) = I^{\vee/+}(\ind_{C_i}f_t^{(i)}) + c, i=1,2$.
Hence, $X_{C_1} \stackrel{d}{=} X_{C_2}$. One can similarly show that
\mbox{$X_{D_1}\stackrel{d}{=}X_{D_2}$}.
\end{pf}

In view of (the multi-parameter version) of Krengel's characterization
of dissipativity (see, e.g., \cite{rosinski2} or Corollary 2.4 in
\cite
{roy2010}),
we arrive at the following important result.

%
%
\begin{corollary}\label{cKrengel} A measurable stationary max-i.d. or
i.d. process $X$ is generated by a dissipative flow if and only if
it has a mixed moving maximum/average representation. That is, for some
\mbox{$\sigma$-}finite Borel space $(V,{\mathcal V},\nu)$ and
$(\Omega,\BB,\mu) = (\T^d \times V, {\mathcal A} \otimes
{\mathcal V}, \lambda
\otimes\nu)$, we have
\[
X \stackrel{d} {=} \bigl\{ I^{\vee/+}(f_t) + c, t\in
\T^d \bigr\},\qquad \mbox{where } f_t(s,v) =
f_0(t+s,v), (s,v)\in\Omega
\]
for some $f_0\in\LL^{\vee/+}(\Omega,\BB,\mu)$.
\end{corollary}

\section{Examples of max-i.d. processes}\label{secexamples}

\subsection{Max-stable processes}\label{secmax-stable}
Max-stable processes form a subclass of the max-i.d. processes. Fix
$\alpha>0$. A process $X=\{X(t), t\in T\}$ is called ($\alpha$-Fr\'
echet) max-stable if for every $n\in\N$ the process $X_1\vee\cdots
\vee
X_n$ has the same law as $n^{1/\alpha} X$, where $X_1,\ldots,X_n$ are
i.i.d. copies of $X$. The marginal distributions of $X$ are $\alpha
$-Fr\'echet distributions of the form $\P[X(t)\leq x] = \exp\{-\sigma
^{\alpha}(t) x^{-\alpha}\}$, $x>0$. Here, $\sigma(t)>0$ is called the
scale parameter of $X(t)$.

For max-stable processes, a theory of spectral representations has
been developed; see~\cite
{dehaan84,dehaanpickands86,stoev07,kabluchkoextremes,wangstoev10}. We
will explain the connection to the max-i.d. spectral representations
developed here.
Let $L^{\alpha}_+(\Omega',\BB',\mu')$ be the set of measurable
functions $g\dvtx \Omega'\to[0,\infty)$ such that $\int_{\Omega'}
g^{\alpha}\,\mathrm{d}\mu'<\infty$. Let
$\Omega=(0,\infty) \times\Omega'$ be equipped with the product
$\sigma
$-algebra $\BB$ and with a~measure $\mu=\alpha u^{-(\alpha+1)} \,\mathrm{d}u
\mu
'$. A collection of functions $\{g_t, t\in T\}\subset L^{\alpha
}_+(\Omega',\BB',\mu')$ is called a spectral representation of a
max-stable process $\{X(t), t\in T\}$ if
%
%
\begin{equation}
\label{eqspecrepmaxstabledef} \bigl\{X(t), t\in T\bigr\} \stackrel{d} {=} \biggl\{\bigvee
_{i\in\bbN} u_i g_t\bigl(
\omega_i'\bigr), t\in T \biggr\},
\end{equation}
where $\{(u_i, \omega_i'), i\in\N\}$, are points of the Poisson process
$\Pi_{\mu}$ with intensity $\mu$ on $\Omega$.
The process~$X$, being also max-i.d., must admit a spectral
representation in the sense of Section~\ref{secmax-id}. This
representation can be constructed as follows. Define $f_t\dvtx \Omega\to
[0,\infty)$ by $f_t(u, \omega')= u g_t(\omega')$, $t\in T$. Then,
(\ref{eqspecrepmaxstabledef}) implies that $\{f_t, t\in T\}$ is a spectral
representation of $X$ viewed as a max-i.d. process.

A spectral representation $\{g_t, t\in T\}\subset L^{\alpha}_+(\Omega
',\BB',\mu')$ of a max-stable process $X$ is called minimal
(see~\cite
{kabluchkoextremes,wangstoev09a,wangstoev10}) if \textup{(i)} $\operatorname{supp} \{ g_t, t\in T\} = \Omega' \mod\mu'$ and
\textup{(ii)} $\sigma\{g_t/g_s, t,s\in T\} = \BB' \mod\mu'$.

%
\begin{lemma}\label{lminimality}
In the above context, if $\{g_t, t\in T\}$ is a minimal spectral
representation of a max-stable process $X$, then $\{f_t, t\in T\}$ is
a minimal spectral representation of $X$ as a max-i.d. process.
\end{lemma}

\begin{pf}
Notice that $(f_t/f_s)(x,y) = (g_t/g_s)(y)$, (with $0/0$ is interpreted
as $0$) does not
depend on $x$. Therefore, $\rho(F):= \sigma\{f_t/f_s, t,s\in T\} =
\bbR
_+ \times\sigma\{ g_t/g_s, t,s \in T\}$, which is ($\mod \mu$)
equivalent to
$\bbR_+\times \BB_{\Omega'}:= \{\bbR_+ \times B\dvtx   B\in\BB
_{\Omega'}\}$ by condition~\textup{(ii)}.
We also have that $g_t$ is ($\mod \mu$) measurable with respect to
(w.r.t.) $\bbR_+\times \BB_{\Omega'}$ ($\mod \mu$)
and since $\rho(F) = \bbR_+\times \BB_{\Omega'}$ ($\mod \mu$), it
follows that $g_t$ is ($\mod \mu$) measurable
w.r.t.~$\sigma(F):= \sigma\{ f_t, t\in T\}$ $(\supset\rho(F))$. Therefore,
$(x,y)\mapsto x\ind_{\{\operatorname{supp}(g_t)\}} (y) = f_t(x,y)/g_t(y)$ is
($\mod \mu$) measurable
w.r.t.~$\sigma(F)$. Now, the full support condition \textup{(i)} implies also
that $(x,y)\mapsto x$ is ($\mod \mu$) measurable w.r.t.~$\sigma(F)$.
This implies that
$\mathop{\BB}_{\bbR_+}\times\,\Omega'$ is included in $\sigma(F)$ ($\mod
\mu$). Since also $\bbR_+\times \BB_{\Omega'}$ is ($\mod \mu$)
included in $\sigma(F)$,
it follows that ${\BB}_{\bbR_+}\times \BB_{\Omega'}$ is ($\mod \mu$)
contained in $\sigma(F)$. This shows that
${\BB}_{\bbR_+}\otimes{\BB}_{\Omega'} \equiv\sigma({\BB}_{\bbR
_+}\times \BB_{\Omega'}) = \sigma(F)$ ($\mod \mu$). This shows that
$\{f_t,t\in T\}$ is a minimal representation of $X$ since condition~\textup{(i)}
for $\{g_t, t\in T\}$ implies
also the full support condition for the~$f_t$'s.
\end{pf}

%
%
\begin{remark}
Lemma \ref{lminimality} shows that all previous results on max-stable
processes that rely on the notion of minimality
can be obtained via the new notion of minimality. The following
construction due to Maharam gives the precise
connection between the ``old'' and ``new'' spectral representations in
the case of stationary processes.
\end{remark}

Let $X = \{X(t), t\in\R^d\}$ be a stationary stochastically continuous
max-stable process
with $\alpha$-Fr\'echet margins. Then by~\cite{dehaanpickands86},
there is a \emph{non-singular} flow $T'_t$ on a $\sigma$-finite Borel
space $(\Omega',{\BB}',\mu')$
and a function $g_0 \in L_+^\alpha(\Omega',{\BB}',\mu')$ such that
$\{
g_t, t\in\R^d\}$ is a minimal spectral representation of $X$, where
\[
g_t =  \biggl( \frac{\mathrm{d} \mu' \circ T_t'}{\mathrm{d} \mu'}  \biggr)^{1/\alpha}
g_0 \circ T_t',\qquad t\in\R. %
\]
(Recall that a measurable flow $\{T'_t\}_{t\in\R^d}$ is said to be
non-singular if the measures $\mu'\circ T'_t$ and $\mu$ are equivalent.)

The process $X$, being max-stable, is also max-i.d. Let us construct
the flow representation of $X$ in the sense of Section~\ref{secflow}.
We shall employ the
Maharam construction~\cite{maharam64,aaronsonbook}. Let $\Omega=
(0,\infty) \times\Omega'$ and consider the mappings $T_t\dvtx \Omega\to
\Omega$ defined by
%
%
\begin{equation}
\label{eMaharam} T_t \bigl(u,\omega'\bigr):= \biggl(
\biggl( \frac{\mathrm{d}\mu'\circ T_t'}{\mathrm{d}\mu
'}\bigl(\omega'\bigr) \biggr)^{1/\alpha}
u, T'_t \bigl(\omega'\bigr) \biggr),
\end{equation}
where $(t,\omega')\mapsto{\mathrm{d}(\mu'\circ T_t')}/{\mathrm{d}\mu'}(\omega')$ is a
measurable version of the Radon--Nikodym
derivatives (see, e.g., Theorem A.1 in~\cite{kolodynskirosinski03}). It
is easy to see that $\{T_t\}_{t\in\mathbb R}$ is a measurable flow,
which is measure-preserving (see, e.g., \cite
{maharam64,aaronsonbook}). Now, (\ref{eMaharam}) implies that $(f_0, \{
T_t\}_{t\in\R^d})$ is a flow representation of $X$ in the sense of
Section~\ref{secflow}.

\subsection{Independent random variables} \label{seciid}
Any collection $\{X(t), t\in T\}$ of independent random variables forms
a max-i.d. process. To see this, take any $n\in\N$ and let
$X_{i,n}(t), 1\leq i\leq n, t\in T$, be independent random variables
such that $\P[X_{i,n}(t)\leq x]=(\P[X(t)\leq x])^{1/n}$. Then, $\{X(t),
t\in T\}$ has the same law as $\{\bigvee_{i=1}^n X_{i,n}(t), t\in T\}$,
thus showing the max-i.d. property. Assume that $T$ is countable.
Then, Condition~\textup{S} is satisfied. The minimal spectral representation
of $\{X(t), t\in T\}$ can be constructed as follows. As always, we
assume that $\operatorname{essinf}  X(t)=0$ and,
additionally, $\P[X(t)=0]<1$ for all
$t\in T$. Let $\Omega=T\times(0,\infty)$ be endowed with the product
of the power set $2^T$ and the Borel $\sigma$-algebra on $(0,\infty)$.
Define a measure $\mu$ on $\Omega$ by $\mu(\{t\}\times[x,\infty
))=-\log\P[X(t) < x]$, $t\in T$, $x>0$. In this way, $\Omega$ turns
into a $\sigma$-finite Borel space.
Define the functions $f_t\dvtx \Omega\to\R$, $t\in T$, by
\[
f_t(s,x)= %
\cases{ x, &\quad$t=s$,
\cr
0, &\quad$t\neq
s$,}\hspace*{16pt}
 s\in T, x>0. %
\]
Then, $\{f_t, t\in T\}$ is a minimal spectral representation of $\{X_t,
t\in T\}$. If $T=\Z$ and the random variables $X_t$ are i.i.d., then
$X$ is stationary and we can define a (discrete time) flow
representation by setting $T_t(s,x)=(s-t,x)$, $t\in\Z$, and noting that
$f_t=f_0\circ T_t$.

\subsection{Mixed moving maximum processes} \label{secmmm}

Here, we present a general probabilistic construction of mixed moving
maxima max-i.d. processes. Similar construction
applies in the i.d. context. Let $\{U_i, i\in\N\}$ (interpreted as
storm centers) be the points of a Poisson process on $\R^d$ with constant
intensity $\lambda$.
Let $\{F(t), t\in\R^d\}$ be a measurable random process with values in
$[0,\infty)$ such that for every $a>0$, we have
%
%
\begin{equation}
\label{eMM-integrability-condition} \int_{\R^d} \P\bigl[F(t)>a\bigr] \,\mathrm{d}t <\infty.
\end{equation}
Let $F_n, n\in\N$, be i.i.d. copies of $F$ (storms), which are
independent from the Poisson process $\{U_i, i\in\N\}$ of storm
centers. Define a process $\{X(t), t\in\R^d\}$ by
%
%
\begin{equation}
\label{emmm-def} X(t)=\sup_{i\in\N} F_i(t-U_i).
\end{equation}
Condition (\ref{eMM-integrability-condition}) implies that $X$ is a
well-defined max-i.d. process, which is stationary by the translation
invariance
of the point process $\{U_i, i\in\N\}$. Indeed, without loss of
generality, we can let $F_i(t) = f(t,V_i)$, where $V_i$ are i.i.d. $\operatorname{Uniform}(0,1)$ random variables
and $f\dvtx \R^d\times[0,1] \to[0,\infty)$ is a Borel function. Thus,
$\Pi
_\mu= \{(U_i,V_i), i\in\N\}$ is a Poisson point process on
$\bbR^d\times[0,1]$ with intensity $\mu(\mathrm{d}u\,\mathrm{d}v) = \lambda \,\mathrm{d}u \,\mathrm{d}v$.
Relation (\ref{eMM-integrability-condition}) and Fubini's theorem guarantee
that $f_t(u,v):= f(t-u,v) \in\LL^+(\bbR^d\times[0,1],\mu)$, for all
$t\in\bbR^d$, and hence
\[
\bigl\{X(t), t\in\R^d\bigr\} \stackrel{d} {=}  \biggl\{\int
_{\bbR^d\times
[0,1]}^\vee f(t-u,v)\Pi_\mu(\mathrm{d}u,\mathrm{d}v) \biggr\} %
\]
is well-defined. Clearly $f_t(u,v) = f_0(T_t(u,v))$, where $f_0(u,v):=
f(-u,v)$ and $T_{t}(u,v):= (u-t,v), t\in\bbR^d$, is the simple
Lebesgue shift
flow in the first coordinate, which is measurable and
measure-preserving. This shows that the process $X$ is stationary and
in fact has a \textit{mixed moving
maximum} representation. The above discussion and Corollary \ref
{cKrengel} imply that the process $X$ in (\ref{emmm-def}) is generated
by a dissipative flow.

\subsection{Max-i.d. processes associated to Poisson line processes}\label{secPoisson-line}
Instead of taking points of a Poisson process as storm centers in (\ref
{emmm-def}), we can also take lines of a Poisson line process as storm centers.
Let $\mathbb T = \R/ (2\uppi\Z)$ be identified with the unit circle.
Each point $(r,\varphi)$ in $\mathbb L:= \R\times\mathbb T$
corresponds to an oriented line in $\R^2$
which passes through the point $(r\cos\varphi, r \sin\varphi)$ in the
direction of the vector $(-\sin\varphi, \cos\varphi)$. In this way,
$\mathbb L$ can be identified with the set
of all oriented lines in $\R^2$. Take a Poisson point process $\{
(r_i,\varphi_i), i\in\N\}$ on $\mathbb L$ whose
intensity is $\lambda \,\mathrm{d}r \times \mathrm{d}\varphi$, where $\lambda>0$ is constant.
The corresponding random set of lines in $\R^2$ is called the Poisson
line process and is interpreted as the set of storm centers. Its law is
invariant with
respect to translations of $\R^2$; see, for example, \cite{kingmanbook}.

Let now $\{F(t), t\in\R\}$ be a measurable process with values in
$[0,\infty)$ such that for every $a>0$,
we have $\int_\R\P[F(t)>a] \,\mathrm{d}t <\infty$. Let $F_i$, $i\in\N$, be
i.i.d. copies of $F$ (storms). Define a process $\{\eta(x,y),
(x,y)\in
\R^2\}$ by
%
%
\begin{equation}
\label{eetaPoissonline} \eta(x,y)= \max_{i\in\N} F_i (x\cos
\varphi_i + y\sin\varphi_i - r_i).
\end{equation}
Note that $|x\cos\varphi+y\sin\varphi-r|$ is the distance from the point
$(x,y)$ to the line corresponding to $(r,\varphi)\in\mathbb L$.
As in the previous Section~\ref{secmmm}, without loss of generality, we
have $F_i(t)=f(t,V_i)$, where $V_i$ are i.i.d. $\operatorname{Uniform}(0,1)$ random
variables and hence
%
%
\begin{eqnarray}\label{eeta-x,y-f-rep}
&& \bigl\{\eta(x,y), (x,y)\in\R^2\bigr\}
\nonumber\\[-8pt]\\[-8pt]
&&\quad \stackrel{d} {=} \biggl\{ \int_{\R
\times[0,2\uppi)\times[0,1]}^\vee f(x\cos\varphi+ y\sin
\varphi- r,v) \Pi_\mu(\mathrm{d}r,\mathrm{d}\varphi, \mathrm{d}v), (x,y)\in\R^2 \biggr\},\nonumber
\end{eqnarray}
where $\Pi_\mu= \{ (r_i,\varphi_i,v_i), i\in\N\}$ is a Poisson
process on $\Omega:=\R\times[0,2\uppi)\times[0,1]$ with intensity
$\mu(\mathrm{d}r,\mathrm{d}\varphi,\mathrm{d}v) = \lambda\,\mathrm{d}r\,\mathrm{d}\varphi \,\mathrm{d}v$. This representation
together with the translation invariance of the Poisson line process
readily implies the following result.

%
%
\begin{proposition} \label{petaPoissonline}
$\eta$ is a stationary max-i.d. process on $\R^2$.
\end{proposition}

One can construct also max-stable processes of this type. Fix $\alpha
>0$. Start with a Poisson process
$\Phi=\{(r_i,\varphi_i, z_i), i\in\N\}$ on $\mathbb L\times
(0,\infty)$
with intensity $\lambda \,\mathrm{d}r\times \mathrm{d}\varphi\times\alpha z^{-(\alpha+1)}
\,\mathrm{d}z$. Let
$\{F(t), t\in\R\}$ be a process with values in $[0,\infty)$ such that
$\E\int_{\R} F^{\alpha}(r) \,\mathrm{d}r<\infty$. Let $\{F_i, i\in\N\}$ be
independent copies of $F$. Define
%
%
\begin{equation}
\label{eetaPoissonlinemaxstable} \zeta(x,y)= \max_{i\in\N} z_i
F_i (x\cos\varphi_i+y\sin\varphi_i-r_i).
\end{equation}

%
%
\begin{proposition}
$\{\zeta(x,y),(x,y)\in\R^2\}$ is a stationary max-stable process with
$\alpha$-Fr\'echet margins.
\end{proposition}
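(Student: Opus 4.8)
The plan is to recognize the construction \eqref{e:eta_Poisson_line_max_stable} as an instance of the max--stable spectral representation \eqref{eq:spec_rep_max_stable_def} and then to deduce stationarity from a measure--preserving flow, in close parallel with the treatment of $\eta$ in Proposition~\ref{p:eta_Poisson_line}. First I would write $F_i(t)=f(t,V_i)$ with $\{V_i\}$ i.i.d.\ $\mathrm{Uniform}(0,1)$ independent of the marked line process, so that $\Pi_\mu=\{(r_i,\varphi_i,z_i,v_i),\ i\in\N\}$ is a Poisson process on $\Omega:=(0,\infty)\times\Omega'$, where $\Omega':=\R\times[0,2\pi)\times[0,1]$ carries $\mu'=\lambda\,dr\,d\varphi\,dv$ and the full intensity is $\alpha z^{-(\alpha+1)}dz\times\mu'$. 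Setting $g_{(x,y)}(r,\varphi,v):=f(x\cos\varphi+y\sin\varphi-r,v)$, the process takes the form $\zeta(x,y)=\bigvee_i z_i\,g_{(x,y)}(r_i,\varphi_i,v_i)$, which is exactly \eqref{eq:spec_rep_max_stable_def} with amplitude variable $z$.

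The one substantive computation is to verify that $g_{(x,y)}\in L^\alpha_+(\Omega',\BB',\mu')$. For fixed $\varphi$ and $v$ the substitution $s=x\cos\varphi+y\sin\varphi-r$ gives $\int_\R g_{(x,y)}(r,\varphi,v)^\alpha\,dr=\int_\R f(s,v)^\alpha\,ds$, a quantity independent of both $(x,y)$ and $\varphi$. Integrating out the remaining variables therefore yields
\[
\int_{\Omega'} g_{(x,y)}^\alpha\,d\mu' = 2\pi\lambda\int_{[0,1]}\int_\R f(s,v)^\alpha\,ds\,dv = 2\pi\lambda\,\E\!\int_\R F^\alpha(s)\,ds <\infty,
\]
finite by the hypothesis $\E\int_\R F^\alpha(r)\,dr<\infty$, with all interchanges justified by Tonelli's theorem and nonnegativity. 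Thus $\{g_{(x,y)}\}\subset L^\alpha_+$ is a genuine spectral representation in the sense of Section~\ref{sec:max-stable}, so $\zeta$ is well defined; its margins are $\alpha$--Fr\'echet with constant scale $\sigma^\alpha=2\pi\lambda\,\E\int_\R F^\alpha$. Max--stability then follows because the finite--dimensional exponent $V(a_1,\dots,a_n)=\mu\big(\cup_j\{(z,\omega'):z\,g_{(x_j,y_j)}(\omega')>a_j\}\big)$ is homogeneous of degree $-\alpha$ under $z\mapsto cz$, which forces $\bigvee_{k=1}^m\zeta_k\eqdistr m^{1/\alpha}\zeta$ for i.i.d.\ copies $\zeta_k$.

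For stationarity I would exhibit the flow explicitly. Since $x\cos\varphi+y\sin\varphi-r$ is the signed distance from $(x,y)$ to the line $(r,\varphi)$, translating by $(x_0,y_0)$ gives $(x+x_0)\cos\varphi+(y+y_0)\sin\varphi-r=x\cos\varphi+y\sin\varphi-(r-x_0\cos\varphi-y_0\sin\varphi)$. Defining $\theta_{(x_0,y_0)}(r,\varphi,v):=(r-x_0\cos\varphi-y_0\sin\varphi,\,\varphi,\,v)$ on $\Omega'$, one checks that these maps compose additively and hence form a flow, that for each fixed $\varphi$ each map is a shift of the $r$--coordinate and so preserves $\mu'$, and that $g_{(x+x_0,y+y_0)}=g_{(x,y)}\circ\theta_{(x_0,y_0)}$. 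Leaving the amplitude $z$ untouched, $T_{(x_0,y_0)}(z,\omega'):=(z,\theta_{(x_0,y_0)}(\omega'))$ is a measurable, measure--preserving flow on $\Omega$ with $f_{(x+x_0,y+y_0)}=f_{(x,y)}\circ T_{(x_0,y_0)}$, where $f_{(x,y)}(z,\omega'):=z\,g_{(x,y)}(\omega')$. Invariance of the intensity measure under $T_{(x_0,y_0)}$ implies the shifted Poisson process has the same law as $\Pi_\mu$, whence $\{\zeta(x+x_0,y+y_0)\}\eqdistr\{\zeta(x,y)\}$.

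The only nontrivial point — and thus the main obstacle — is the integrability/independence step of the second paragraph: one must observe that integrating $r$ over all of $\R$ simultaneously removes the $(x,y)$-- and $\varphi$--dependence of $\int g_{(x,y)}^\alpha\,d\mu'$, collapsing it to $\E\int_\R F^\alpha$. Everything else (recognizing the spectral form, the homogeneity yielding max--stability, and the shear flow yielding stationarity) is formal and mirrors Proposition~\ref{p:eta_Poisson_line}.
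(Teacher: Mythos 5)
Your proposal is correct and takes essentially the same route as the paper, whose proof is a one--line appeal to ``the properties of the Poisson processes'' (your homogeneity-of-the-exponent-measure argument for max--stability) and to the translation invariance of the Poisson line process (which is exactly your measure--preserving shear flow $\theta_{(x_0,y_0)}$ on the parameter space). You have simply written out in full what the paper leaves implicit, including the $L^\alpha_+$ integrability check guaranteed by the hypothesis $\E\int_\R F^\alpha(r)\,dr<\infty$.
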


Max-stability follows directly from the properties of the
Poisson processes and stationarity is the consequence of the
stationarity of
the Poisson line process.
For simplicity, we considered here processes based on Poisson lines in
$\R^2$, but a similar construction is possible in $\R^d$,
where the lines are replaced by $k$-dimensional affine subspaces in $\R
^d, k<d$.
For $k=0$ we recover the mixed moving maxima processes, for $k\geq1$,
however, these processes are generated
by a conservative flow. We show this next for the case of the Poisson
line process ($k=1$).

%
%
\begin{proposition} \label{pPoiss-line} The Poisson line max-i.d. process in (\ref{eetaPoissonline}) is generated by a
conservative flow.
\end{proposition}

\begin{pf} Without loss of generality, we may assume that $\operatorname{Leb}[
t\in\bbR\dvtx   F(t)>0] >0$, almost surely.

Indeed, let as above $F(t) = f(t,V)$ and $A:= \{ v\in[0,1]\dvtx \operatorname{Leb}[ t\in\bbR\dvtx   f(t,v)>0] = 0\}$. Suppose first that $\operatorname{Leb}
(A) = 1$, that
is the paths $t\mapsto F(t)$ are zero for almost all $t\in\bbR$, with
probability one. For example, $F(t) = \ind_{B}(t)$ for a set $B$ of
Lebesgue measure zero. In this case, we have that for all fixed
$(x,y)\in\bbR^2$, the random variable $\eta(x,y)$
in~(\ref{eetaPoissonline}) is almost surely zero.\vspace*{-2pt}

On the other hand, if $0<\operatorname{Leb} (A)<1$, consider the process $G(t)=
f(t,W)$, where $W\stackrel{d}{=} V \vert A^c$ have the conditional
distribution of
$V$ restricted to the set $A^c:= [0,1]\setminus A$. By a thinning
argument and replacing in (\ref{eetaPoissonline}), $\lambda$ and $F$ by
$\lambda/\operatorname{Leb}(A^c)$ and $G$, respectively,
we see that for all $(x,y)\in\bbR^2$, we have
\[
\bigl\{\eta(x,y), (x,y)\in\R^2\bigr\} \stackrel{d} {=}  \Bigl\{
\max_{i\in
\N} G_i(x\cos\varphi_i + y
\sin\varphi_i - r_i), (x,y)\in\R^2 \Bigr\}, %
\]
where $G_i$'s are independent copies of $G$. With probability one,
however, the paths of the process $t\mapsto G(t)$ are positive over a
set of positive Lebesgue measure.
This shows that, without loss of generality, we can suppose that (\ref
{eetaPoissonline}) holds with $\operatorname{Leb}[ t\in\bbR\dvtx   F(t)>0] >0$,
almost surely.

Let now $\psi(x)>0, x>0$, be as in Theorem \ref{teoC-int-test}. In
view of (\ref{eeta-x,y-f-rep}), we have that
\[
J_\psi(\varphi,r,v):= \int_{\R^2} \psi\bigl( f(x
\cos\varphi+ y\sin \varphi - r,v) \bigr) \,\mathrm{d}x\,\mathrm{d}y = \int_{\R}  \biggl( \int_{\R} \psi\bigl( f(\widetilde x,v) \bigr) \,\mathrm{d}
\widetilde x  \biggr) \,\mathrm{d}\widetilde y, %
\]
where $\widetilde x:= x\cos\varphi+ y\sin\varphi-r$, and $
\widetilde y:= -x\sin\varphi+ y\cos\varphi$.

Since $\operatorname{Leb}[ t\in\bbR\dvtx   F(t) = f(t,V) >0]>0$ almost surely, we
have $\int_{\R} \psi( f(\widetilde x,V) ) \,\mathrm{d}\widetilde x >0$ a.s. Hence,
$J_\psi(\varphi,r,v) = \infty$ for almost all $(\varphi,r,v)\in\R
\times[0,2\uppi)\times[0,1]$, which means that the
process $\eta$ is generated by a conservative flow.
\end{pf}

%
%
\begin{remark} In general, we conjecture that the Poisson line process
$X$ for $k\ge1$ is generated by a null-recurrent flow (see,
e.g., Samorodnitsky \cite{samorodnitsky05}, and also \cite
{roy07,kabluchkoextremes,wangstoev10}).
\end{remark}

\subsection{Penrose min-i.d. random fields} \label{secPenrose}

The next family of examples generalizes the processes considered by
Penrose \cite{penrose88,penrose91,penrose92}. Let $\Pi=\{U_i, i\in\Z
\}$
be the points of Poisson process on $\R^k$ with a constant intensity
$\lambda$. Let $\{\xi_i(t), t\in\R^d\}$, $i\in\Z$, be independent
copies of a random field $\{\xi(t), t\in\R^d\}$ with values in $\R^k$
which has stationary increments. Let $|\cdot|$ be
the Euclidean norm. Define
%
%
\begin{equation}
\label{eX-Penrose} X(t)=\min_{i\in\Z} \bigl|U_i+
\xi_i(t)\bigr|.
\end{equation}

%
%
\begin{proposition}
The process $X$ is stationary, min-i.d. process (i.e., $-X$ is max-i.d.).
\end{proposition}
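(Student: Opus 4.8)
The plan is to establish the two assertions—stationarity and the min--i.d.\ (equivalently, max--i.d.\ for $-X$) property—directly from the Poisson structure in \eqref{e:X-Penrose}, and then to recognize $-X$ as a stochastic max--integral of the type treated in Section \ref{sec:max-id}. First I would set up a spectral representation. The natural mark space is $\Omega = \R^k \times C$, where $C$ is the path space of the field $\xi$, equipped with the product of Lebesgue measure $\lambda\, du$ on $\R^k$ and the law $Q$ of $\xi$. Writing the i.i.d.\ marks as $\xi_i$ with $\Pi_\mu = \{(U_i, \xi_i),\ i\in\Z\}$ a Poisson point process on $\Omega$ with intensity $\mu = \lambda\, du\, Q(d\xi)$, the field becomes $X(t) = \min_{i} |U_i + \xi_i(t)|$. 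To fit the max--integral framework I would pass to $-X$ and introduce a monotone transform (e.g.\ $x \mapsto e^{-x}$ or $x \mapsto (\sup - x)_+$) so that the resulting integrand lies in $\LL^\vee$; concretely, setting $f_t(u,\xi) = h(|u + \xi(t)|)$ for a suitable strictly decreasing $h$ gives $-h(X(t)) = \sup_i\, (-h(|U_i+\xi_i(t)|))$ after adjusting signs, placing the transformed process squarely in the form \eqref{e:e-int-def}.

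The max--i.d.\ property of the transformed process is then automatic, exactly as noted after \eqref{e:X-fdd}: any process of the form $\{I(f_t),\ t\in T\}$ with deterministic $f_t\in\LL^\vee$ is max--i.d., since the i.i.d.\ components in \eqref{eq:def_max_id_process} are obtained by splitting the Poisson process $\Pi_\mu$ into $n$ independent copies of $\Pi_{\mu/n}$. Equivalently, and more elementarily, one can argue directly from \eqref{e:X-Penrose}: for each $n$, thin the Poisson process $\Pi$ of intensity $\lambda$ into $n$ independent Poisson processes of intensity $\lambda/n$, distribute the marks $\xi_i$ accordingly, and observe that the minimum over the union equals the minimum of the $n$ independent sub-minima. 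This realizes $X$ as the pointwise minimum of $n$ i.i.d.\ copies of a field of the same form, which is the defining property of min--i.d.

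For stationarity I would use the translation structure of the field. Fix $s\in\R^d$ and consider the shifted field $\{X(t+s),\ t\in\R^d\}$. Because $\xi$ has stationary increments, the law of $\{\xi(t+s) - \xi(s),\ t\in\R^d\}$ coincides with the law of $\{\xi(t),\ t\in\R^d\}$. Writing $|U_i + \xi_i(t+s)| = |(U_i + \xi_i(s)) + (\xi_i(t+s) - \xi_i(s))|$, I would apply the map $(u,\xi) \mapsto (u + \xi(s),\, \xi(\cdot+s)-\xi(s))$ to the marks. The key point is that this transformation preserves the intensity measure $\mu = \lambda\, du\, Q(d\xi)$: the $u$-shift by $\xi(s)$ preserves Lebesgue measure for each fixed increment, and the increment reparametrization preserves $Q$ by stationarity of increments; by Fubini these combine to leave $\mu$ invariant. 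By the mapping theorem for Poisson processes, the transformed process has the same intensity, so $\{X(t+s),\ t\} \eqdistr \{X(t),\ t\}$.

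The main obstacle I anticipate is measurability and integrability bookkeeping rather than any conceptual difficulty: one must verify that $(u,\xi)\mapsto |u+\xi(t)|$ is jointly measurable on $\Omega$ (using a measurable version of $\xi$), that the transformed integrand $f_t$ genuinely lies in $\LL^\vee$—i.e.\ that $\mu\{(u,\xi): h(|u+\xi(t)|) > a\} = \lambda\int |\{u : |u+\xi(t)| < h^{-1}(a)\}|\, Q(d\xi) < \infty$, which holds because the inverse image is a ball of fixed radius in $\R^k$ and hence has finite Lebesgue measure—and that the minimum in \eqref{e:X-Penrose} is almost surely attained and finite (so that $X$ is a well-defined $\R$-valued process, with at least one Poisson point falling in any ball of positive radius, guaranteed by $\lambda>0$). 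Once these routine verifications are in place, the stationarity and min--i.d.\ conclusions follow from the two arguments above.
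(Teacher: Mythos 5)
Your proposal is correct and follows essentially the same route as the paper: the min--i.d.\ property via thinning the Poisson process $\Pi$ into $n$ independent copies of intensity $\lambda/n$, and stationarity from the stationarity of increments of $\xi$ (which the paper handles by citing Proposition~2.1 of the reference on stationary systems of particles, whereas you spell out the underlying mapping-theorem computation with $(u,\xi)\mapsto(u+\xi(s),\,\xi(\cdot+s)-\xi(s))$). The additional $\LL^\vee$ bookkeeping you include is harmless but not needed for this proposition.
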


The min-i.d. property follows directly from the fact that for every
$n\in\N$, we can represent $\Pi$ as a union of $n$ independent Poisson
processes with constant intensity $\frac{\lambda} n$. The
stationarity of $X$ follows from the stationarity of increments of $\xi
$; see Proposition~2.1 in~\cite{kabluchkostationarysystems}. To
construct concrete families of examples one may take
$k=1$ and $\xi$ to be the zero-mean Gaussian process defined on $\R^d$
with covariance function $\E[\xi(t)\xi(s)]=\frac{\sigma^2}2
(|t|^{2H}+ |s|^{2H}- |t-s|^{2H})$, where $H\in(0,1]$ is
the Hurst exponent and $\sigma^2>0$ (see Figure~\ref{figPenrose}). Min-i.d.  processes of this type
appeared in~\cite{kabluchkoextremes2} as limits of pointwise minima (in
the sense of absolute value) of independent
Gaussian processes.

%
%
\begin{figure}[b]

\includegraphics{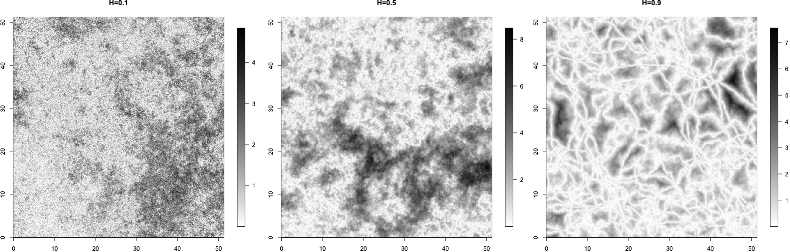}

\caption{Realizations of Penrose-type min-i.d. random fields driven
by isotropic L\'evy fractional Brownian motions defined on $\R^2$ with
Hurst exponents $H = 0.1$, $0.5$, and $0.9$, respectively, left to right.}\label{figPenrose}
\end{figure}

One can also take $d=1$, $k\in\bbN$ arbitrary and let $\xi$ be the
$\R
^k$-valued standard Brownian motion. The next result shows that the
resulting processes, which were introduced and studied by Penrose~\cite
{penrose88,penrose91,penrose92}, are of mixed moving maximum
type for $k\geq3$ and are conservative for $k\leq2$.

%
%
\begin{proposition} Let $X=\{X(t), t\in\R\}$ be as in (\ref
{eX-Penrose}), where $\{\xi(t), t\in\R\}$ is the standard Brownian
motion in $\R^k$.
The max-i.d. process $-X$ is generated by a conservative flow for
$k=1, 2$ and dissipative for $k\geq3$.
\end{proposition}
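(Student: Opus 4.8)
The plan is to exhibit an explicit full--support spectral representation of the max--i.d.\ process $-X$ and then apply the conservative/dissipative integral test of Theorem~\ref{thm:C-int-test}. First I would record the representation. By the marking theorem, $\{(U_i,\xi_i)\}$ is a Poisson point process on $\Omega:=\R^k\times W$, where $W$ is the (two--sided) Wiener path space carrying the law $P_W$ of standard $\R^k$--valued Brownian motion, with intensity $\mu=\lambda\,du\otimes P_W$. Since $-X(t)=\max_i(-|U_i+\xi_i(t)|)$ has $\essinf(-X(t))=-\infty$, I normalize by the increasing map $x\mapsto e^x$ and work with $Y(t):=e^{-X(t)}=\max_i e^{-|U_i+\xi_i(t)|}$, which is max--i.d.\ with $\essinf=0$; its generating flow is the one whose dissipativity we must determine. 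Setting
\[
f_t(u,w):=e^{-|u+w(t)|},\qquad (u,w)\in\Omega,
\]
one checks $f_t\in\LL^{\vee}(\Omega,\BB,\mu)$ (indeed $\mu\{f_t>a\}=\lambda\,\mathrm{vol}\,B(0,-\log a)<\infty$ for $0<a<1$) and that $\{f_t,\ t\in\R\}$ is a jointly measurable spectral representation of $Y$. It has full support for the trivial reason that $f_t>0$ everywhere, so no minimality is required to invoke Theorem~\ref{thm:C-int-test}.

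Next I identify the flow. Let $\theta_s$ be the increment shift on $W$, $(\theta_sw)(t)=w(t+s)-w(s)$, which preserves $P_W$ by stationarity of Brownian increments, and define
\[
T_s(u,w):=\big(u+w(s),\,\theta_sw\big).
\]
A change of variables $u\mapsto u-w(s)$ (for fixed $w$) shows $T_s$ is $\mu$--preserving, the cocycle identity for $\theta$ gives $T_{s_1}\circ T_{s_2}=T_{s_1+s_2}$, and a direct computation gives $f_t\circ T_s=f_{t+s}$, so $f_t=f_0\circ T_t$ with $f_0(u,w)=e^{-|u|}$. Thus $\{f_t\}$ is generated by the measurable measure--preserving flow $\{T_t\}$, and the conservative/dissipative dichotomy for $-X$ is exactly that of $\{T_t\}$.

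I would then apply Theorem~\ref{thm:C-int-test} with the choice $\psi(x)=x$, which is admissible since $\int_\Omega\psi(f_0)\,d\mu=\lambda\int_{\R^k}e^{-|u|}\,du<\infty$. The test integral becomes an occupation functional of a Brownian path: writing $B_t:=u+w(t)$ (a two--sided Brownian motion started at $u$) and using the layer--cake identity $e^{-|x|}=\int_0^\infty e^{-r}\ind\{|x|<r\}\,dr$ together with Tonelli,
\[
\int_{\R}f_t(u,w)\,dt=\int_{\R}e^{-|B_t|}\,dt=\int_0^\infty e^{-r}\,\Lambda_r\,dr,\qquad \Lambda_r:=\int_{\R}\ind\{|B_t|<r\}\,dt .
\]
Everything now reduces to the recurrence/transience dichotomy of Brownian motion in $\R^k$. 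For $k\ge 3$ the motion is transient and $\E[\Lambda_r]=\int_{\R}\P[|B_t|<r]\,dt<\infty$, because $\P[|B_t|<r]\le \mathrm{vol}\,B(0,r)\,(2\pi|t|)^{-k/2}$ is integrable at $|t|=\infty$ precisely when $k>2$; hence $\int_\R e^{-|B_t|}\,dt<\infty$ $\mu$--a.e., and the flow is dissipative. For $k=1,2$ the motion is (neighborhood) recurrent, so $\Lambda_r=\infty$ a.s.\ for every $r>0$, the test integral is infinite $\mu$--a.e., and the flow is conservative.

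The main obstacle is the almost--sure (not merely in--expectation) part of the dichotomy. The transient case is painless, since finiteness of $\E[\Lambda_r]$ immediately forces $\Lambda_r<\infty$ a.s. The delicate point is establishing $\Lambda_r=\infty$ a.s.\ in the recurrent case $k=1,2$: here I would use the strong Markov property to decompose the path into the successive sojourns in $B(0,r)$ following the infinitely many returns guaranteed by neighborhood recurrence, and observe that these sojourns contribute i.i.d.\ positive occupation times, whose infinite sum is a.s.\ infinite (equivalently, one invokes the classical fact that the Green occupation measure of a bounded set of positive Lebesgue measure is a.s.\ infinite for a recurrent diffusion). A minor bookkeeping point, handled as in the proof of Theorem~\ref{thm:C-int-test}, is the reduction showing that the conservative/dissipative nature is unaffected by passing from $-X$ to the normalized process $Y=e^{-X}$.
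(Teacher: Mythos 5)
Your proof is correct and follows the same overall strategy as the paper: both reduce the claim to the integral test of Theorem~\ref{thm:C-int-test} applied to a spectral representation whose test integrand is $e^{-|u+w(t)|}$, so that the conservative/dissipative dichotomy becomes exactly the recurrence/transience dichotomy of Brownian motion in $\R^k$. Two differences are worth noting. First, your explicit normalization $Y=e^{-X}$ and explicit construction of the measure--preserving flow $T_s(u,w)=(u+w(s),\theta_s w)$ are tidier than the paper's treatment, which works directly with the negative--valued functions $f_t(u,v)=-|u+v(t)|$ and the choice $\psi(x)=e^{x}$; however, the flow construction is not logically needed, since Theorem~\ref{thm:C-int-test} only requires a jointly measurable spectral representation of full support. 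Second, in the transient case $k\ge 3$ the paper argues pathwise, via the Dvoretzky--Erd\"os test: $\liminf_{|t|\to\infty}|\xi(t)|/|t|^{1/3}=\infty$ a.s., whence $e^{-|u+v(t)|}\le e^{-|t|^{1/3}}$ for large $|t|$ and the test integral is a.s.\ finite; you instead use a first--moment computation with the Gaussian density bound, which is more elementary. The one place where your argument is stated too quickly is precisely there: finiteness of $\E[\Lambda_r]$ for each fixed $r$ gives $\Lambda_r<\infty$ a.s.\ for that $r$, but this alone does not control $\int_0^\infty e^{-r}\Lambda_r\,dr$; you should integrate the expectation in $r$ as well, i.e.\ observe that
\begin{equation*}
\E\int_\R e^{-|B_t|}\,dt=\int_0^\infty e^{-r}\,\E[\Lambda_r]\,dr\le\int_0^\infty e^{-r}\bigl(2+C_k r^k\bigr)\,dr<\infty ,
\end{equation*}
which follows at once from your own density bound and closes the gap. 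The recurrent case $k\le 2$ is handled identically in both proofs: the paper cites the a.s.\ infinite occupation time of open sets for neighborhood--recurrent Brownian motion, and your strong Markov / i.i.d.\ sojourn decomposition is the standard proof of that cited fact.
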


\begin{pf}
We shall apply the integral test in Theorem \ref{teoC-int-test} above.
In the case $k=1,2$ the result follows from the
neighborhood-recurrence property of the Brownian motion. In the case
$k\geq3$ we will use the fact that the Brownian
motion in $\R^k$ is transient.

Consider the space $\Omega:= \R\times C_0(\R,\R^k)$,
equipped with the product of the Borel $\sigma$-algebras, where
$C_0(\R,\R^k)$ is the space of $\R^k$-valued continuous functions on $\R$
which vanish at $0$. Consider the Poisson point process
$\Pi= \{ (U_i,\xi_i), i\in\mathbb Z\}$ on $\Omega$ with intensity
$\mu(\mathrm{d}u,\mathrm{d}v) = \lambda(\mathrm{d}u) \P_\xi(\mathrm{d}v)$,
where $\lambda$ is the Lebesgue measure on $R$ and $\P_\xi$ is the law
of $\xi$ on $C_0(\R,\R^k)$. Therefore, for the max-i.d. process
$-X$ we obtain the spectral representation
\[
-X \stackrel{d} {=}  \biggl\{ \int_{\Omega}^\vee
f_t(u,v) \Pi_\mu (\mathrm{d}u,\mathrm{d}v) \biggr\}_{t\in\R},
\qquad\mbox{with } f_t (u,v) = -\bigl|u+ v(t)\bigr|, %
\]
where $v = (v(t))_{t\in\R}\in C_0(\R,\R^k)$. Observe that since $\P
_\xi
[ v(0)= 0] = 1$, we have $\int_\Omega f_0 \,\mathrm{d}\mu= \int_{\bbR} \mathrm{e}^{-|u|}
\,\mathrm{d}u <\infty$ and
one can take $\psi(x):= \mathrm{e}^{x}$ in (\ref{eC-test}).

\emph{Consider first the transient case $k\geq3$.} By the
Dvoretzky--Erd\"os criterion (see, e.g., Theorem~3.22 in \cite
{mortersperes2010}) by taking $g(r) = r^{1/3}$,
we obtain that $\int_1^\infty g(r)^{k-2} r^{-k/2} \,\mathrm{d}r <\infty$, and
therefore $\liminf_{|t|\to\infty} |\xi(t)|/g(t) = \infty$, with
probability one. Thus, for $\P_\xi$-almost all $v$,
we have
\[
\psi\bigl(f_t (u,v)\bigr) = \mathrm{e}^{-|u+v(t)|} \leq\exp\bigl
\{-|t|^{1/3}\bigr\}
\]
for all sufficiently large $|t|$. Since the latter bound is integrable,
we obtain that $\int_\bbR\psi(f_t (u,v)) \,\mathrm{d}t <\infty$ for $\mu$-almost
all $(u,v)\in\Omega$. This, in view of Theorem \ref{teoC-int-test}
implies that $-X$ is generated by a dissipative flow.

\emph{Suppose now $k\leq2$.} By the neighborhood recurrence of the
Brownian motion~\cite{mortersperes2010} in dimensions $k=1,2$, the time
which the Brownian motion spends in any open set is infinite with
probability $1$. It follows immediately that $\int_\bbR\psi(f_t (u,v))
\,\mathrm{d}t =\infty$ for $\mu$-almost every $(u,v)\in\Omega$. By Theorem~\ref{teoC-int-test} this
implies that $-X$ is generated by a conservative flow.
\end{pf}

\subsection{Stationary union-i.d. random sets}\label{secU-id}
A measurable process $\{X(t), t\in\R^d\}$ taking only values $0,1$ can
be identified with the random set $S:=\{t\in\R^d\dvtx  X(t)=1\}$. Note that
we do not require the sets to be, say, closed. If the process $X$ is
max-i.d., then the corresponding random set $S$ is \textit
{union-i.d.} and vice versa. This means that for every $n\in\N$ we can
find i.i.d. random sets $C_1,\ldots,C_n$ such that $S$ has the same
finite-dimensional distributions as $C_1\cup\cdots\cup C_n$;
see~\cite
{molchanovbook}, Chapter~4. The process $X$ is stochastically
continuous iff the random set $S$ is stochastically continuous in the
following sense: for every \mbox{$t\in\R^d$,} $\lim_{s\to t}\P[t\in S,
s\notin
S]=\lim_{s\to t}\P[t\notin S, s\in S]=0$. Using Theorem~\ref
{theoflowrepexists}, we can describe all stationary stochastically
continuous union-i.d. random sets.

%
\begin{theorem}\label{theoflowreprandomsets}
Let $S$ be a stationary, stochastically continuous, union-i.d. random
set in $\R^d$. Then there is a $\sigma$-finite Borel space $(\Omega,
\BB,\mu)$, a measurable, measure-preserving $\R^d$-action $\{T_t\}
_{t\in\R^d}$ on $(\Omega,\BB,\mu)$, and a set $A\in\BB$ with
$\mu
(A)<\infty$ such that
%
%
\begin{equation}
\label{eqspecrepstatset} S \stackrel{d} {=} \bigl\{t\in\R^d\dvtx  \Pi_{\mu}
\bigl(T_t^{-1}(A)\bigr) \neq 0 \bigr\},
\end{equation}
where $\Pi_{\mu}$ is a Poisson random measure on $(\Omega,\mathcal B)$
with intensity measure $\mu$.
\end{theorem}

\begin{pf}
Let $\{X(t), t\in\R^d\}$ be the $\{0,1\}$-valued max-i.d.~process
corresponding to $S$, that is, $X(t)=\ind_{t\in S}$. Then, $X$ has a
flow representation in the sense of
Theorem~\ref{theoflowrepexists}. The function $f_0$ in this
representation takes only values $0,1$, ($\mod \mu$). That is, $f=\ind
_A$ for some set $A\in\BB$. Note that $A$ has finite
measure since $f\in\LL^{\vee}$. Since $f_0\circ T_t = \ind
_{T_t^{-1} A
}$, the statement of the theorem follows.
\end{pf}

%
%
\begin{example}
Let $\Omega$ be the space $\R^d$ endowed with the Lebesgue measure.
Consider a flow $T_t(\omega)=\omega-t, \omega,t\in\R^d$. Let
$A\subset\R^d$ be a Borel set of
finite measure and let $\Pi= \{U_i, i\in\N\}$ be a unit intensity
Poisson process on $\R^d$. Then, the corresponding union-i.d. stationary random set has the form $S = \bigcup_{i\in\N} (U_i - A)$
and it is known in the literature as the Boolean model with
(non-random) grain $A$. More generally, one can let $\Omega:= \R^d
\times E$ be the product of $\R^d$ with some probability
space $E$ and define $T_t(x,y):= (x-t,y), x,t\in\R^d, y\in E$ as the
shift of the first coordinate. By taking a random set $A = A(y)$, one obtains
a \emph{mixed} or random grain Boolean model, which is similar to and in
fact corresponds to the level-set of a mixed moving maxima random
field model.
\end{example}

\section{Proofs} \label{secproofs}
\subsection{Lemma on conjugacy between collections of functions}
The following lemma is used in the proofs of Theorems~\ref
{theominspecrepmaxunique} and~\ref{theominspecrepsumunique}.

%
\begin{lemma} \label{lft-point}
Let $(\Omega_i,{\BB}_i,\mu_i), i=1,2$ be two measure spaces. Consider
two families of measurable functions
$f_t^{(i)}\dvtx \Omega_i\to\R, t\in T$, $i=1,2$, and define two measurable
mappings $F_i\dvtx  (\Omega_i, \BB_i)\to(\R^T, \BB)$ by
$
F_i(\omega)=(f^{(i)}_t(\omega))_{t\in T}
$,
$\omega\in\Omega_i$,
$i=1,2$. Here, $\BB$ is the product \mbox{$\sigma$-}algebra on $\R^T$.
Assume that
\begin{enumerate}[2.]
\item $\sigma\{ f_t^{(i)}, t\in T\}={\BB}_i\mod\mu_i$, $i=1,2$.
\item The induced measures $\mu_1\circ F_1^{-1}$ and $\mu_2\circ
F_2^{-1}$ are equal on $(\R^T,\BB)$.
\end{enumerate}

Then, the following two claims are true:
\begin{enumerate}[(ii)]
\item[(i)] If $(\Omega_1,\BB_1)$ is a Borel space, then there exists a
measurable map $\Phi\dvtx \Omega_2 \to\Omega_1$
such that $\mu_1 = \mu_2 \circ\Phi^{-1}$ and for all $t\in T$, we have
$f_t^{(2)} = f_t^{(1)}\circ  \Phi$, $\mu_1$-a.e.

\item[(ii)] If both $(\Omega_i,\BB_i), i=1,2$ are Borel spaces, then
the mapping in part \textup{(i)} is a measure space
isomorphism and it is unique (modulo null sets).
\end{enumerate}
\end{lemma}

\begin{pf}
It will be convenient to identify the sets in ${\BB}_i$ that are equal
modulo $\mu_i, i=1,2$.
Formally, let ${\mathcal I}_i\subset{\BB}_i$ be the $\sigma$-ideals
of $\mu_i$-null sets in the spaces
$(\Omega_i,{\BB}_i,\mu_i)$ (see, e.g., Chapter~II.21 in~\cite
{sikorskibook}) and let
$[{\BB}_i]:= {\BB}_i/{\mathcal I}_i$ be the corresponding factor
$\sigma
$-fields, $i=1,2$. The elements of $[\BB_i]$ are the equivalence classes
$[B]=\{A\in\BB_i\dvtx  \mu_i(A\Delta B)=0\}$, where $B\in\BB_i, i=1,2$.

We shall define next a $\sigma$-isomorphism $U\dvtx  [{\BB}_1] \to[{\BB
}_2]$, that is, a bijective mapping that preserves countable unions and complements.
For all $B\in{\BB}_1$, we set
%
%
\begin{equation}
\label{eqdefbaru} U\bigl([B]\bigr):= \bigl[F_2^{-1}(A)\bigr],
\qquad\mbox{where } \bigl[F_1^{-1}(A)\bigr] = [B].
\end{equation}
Note that such an $A \in{\BB}$ exists since by assumption
$F_1^{-1}({\BB}) =
\sigma\{ f_t^{(1)}, t\in T\} = {\BB}_1 \mod\mu_1$. One can readily
see that the mapping $U$ is a well-defined $\sigma$-isomorphism.
Indeed, since $\mu_1\circ F_1^{-1} = \mu_2\circ F_2^{-1}$, for every
$A',A''\in\BB$,
\begin{eqnarray*}
\mu_1\bigl(F_1^{-1}\bigl(A'
\bigr) \Delta F_1^{-1}\bigl(A''
\bigr)\bigr) &=& \mu_1\bigl(F_1^{-1}
\bigl(A' \Delta A''\bigr)\bigr)
\\
&=&
\mu_2\bigl(F_2^{-1}\bigl(A' \Delta
A''\bigr)\bigr) = \mu_2
\bigl(F_2^{-1}\bigl(A'\bigr) \Delta
F_2^{-1}\bigl(A''\bigr)\bigr).
\end{eqnarray*}
Thus, $F_1^{-1}(A') = F_1^{-1}(A'') \mod \mu_1$,
if and only if $F_2^{-1}(A') = F_2^{-1}(A'') \mod \mu_2$, and the
definition of $U$ does not depend on the
choice of the representative $B$ of the equivalence class $[B]$ and on
the choice of $A$ in~(\ref{eqdefbaru}). This shows, moreover, that
$[B'] = [B'']$
if and only if $U ([B']) = U([B''])$, that is, $U$ is \emph{injective}.
On the other hand, since
$F_2^{-1}({\BB}) = \sigma\{ f_t^{(2)}, t\in T\} = {\BB}_2\mod\mu_2$,
for all $B \in{\BB}_2$, we have $[F_2^{-1}(A)] = [B]$, for
some $A\in{\BB}$ and hence $U ([F_1^{-1}(A)]) = [B]$. This shows that
$U$ is \emph{onto} and hence a bijection. Also, since $\mu
_1(F_1^{-1}(A))=\mu_2(F_2^{-1}(A))$, we have by~(\ref{eqdefbaru}) that
$U$ is measure-preserving. Since
$U$ clearly preserves the countable unions and complements, it is a
$\sigma$-isomorphism.

\emph{Under the assumption of part \textup{(i)}, we have that $(\Omega_1,{\BB
}_1)$ is a Borel space.} Then, Theorem 32.5 of~\cite{sikorskibook}
implies that the
$\sigma$-isomorphism $U$ is induced by a measurable point mapping
$\Phi\dvtx \Omega_2\to\Omega_1$ in the following sense:
%
%
\begin{equation}
\label{equbaru} U\bigl([B]\bigr) = \bigl[\Phi^{-1}(B)\bigr],\qquad B\in
\mathcal B_1.
\end{equation}
Clearly, since $U$ is a $\sigma$-isomorphism, we also have that $\mu_1
= \mu_2 \circ\Phi^{-1}$.

Let us fix some $t\in T$ and show that $f_t^{(2)}= f_t^{(1)}\circ
\Phi
$ holds $\mu_2$-a.e.
Let $I$ be a Borel subset of $\R$ and consider the cylinder set
$A = \{ \varphi\dvtx  T \to\R\dvtx  \varphi(t)\in I\}\subset\R^T$. We have
%
%
\begin{eqnarray}\label{eqequalpreimages}
\bigl[\bigl(f_t^{(1)}\circ\Phi\bigr)^{-1}(I)
\bigr] &=& \bigl[\Phi^{-1}\bigl(\bigl(f_t^{(1)}
\bigr)^{-1}(I)\bigr)\bigr]= U\bigl(\bigl[\bigl(f_t^{(1)}
\bigr)^{-1}(I)\bigr]\bigr)
\nonumber\\[-8pt]\\[-8pt]
&=& U \bigl(\bigl[F_1^{-1}(A)\bigr]\bigr)=
\bigl[F_2^{-1}(A)\bigr]=\bigl[\bigl(f_t^{(2)}
\bigr)^{-1}(I)\bigr].\nonumber
\end{eqnarray}
Assume that $f_t^{(2)}\neq f_t^{(1)}\circ\Phi$ on $D\in{\BB}_2$ with
$\mu_2(D)>0$. Then we can find an
$\varepsilon>0$ and a measurable set $D'\subset D$ with $\mu_2(D')>0$ such
that $|f_t^{(2)}-f_t^{(1)}\circ  \Phi|>\varepsilon$
everywhere on $D'$. Further, we can find a $k\in\Z$ and a measurable
set $D''\subset D'$ with $\mu_1(D'')>0$
such that with $I=[k\varepsilon, (k+1)\varepsilon)$, we have
$f_t^{(2)}\in I$
everywhere on $D''$. It then follows that
$f_t^{(1)}\circ\Phi\notin I$ on $D''$. But this contradicts~(\ref
{eqequalpreimages}), which implies
$f_t^{(2)} = f_t^{(1)} \circ\Phi$, $\mu_1$-a.e.

\emph{Now, we turn to proving part \textup{(ii)}.} That is, that $\Phi$ a measure
space isomorphism and unique (modulo null sets) under the additional assumption
that $(\Omega_2,\BB_2)$ is a Borel space. By applying the above
argument to the $\sigma$-isomorphism $U^{-1}\dvtx [\BB_2] \to[\BB_1]$, we
obtain that
there exists a measurable, measure-preserving $\widetilde\Phi\dvtx \Omega_1
\to\Omega_2$, such that
\[
U^{-1}\bigl([B]\bigr) = \bigl[\widetilde\Phi^{-1} (B)\bigr]
\qquad\mbox{for all }B\in\BB_2. %
\]
Therefore, $\Psi:= \Phi\circ\widetilde\Phi\dvtx \Omega_1 \to\Omega
_1$ is
measurable and since $U\circ U^{-1} \equiv{\rm id}$, we have that
$[\Psi(A)]= [A]$ for all $A\in{\BB}_1$. We will use the fact that
$(\Omega_1,{\BB}_1)$ is a Borel space to show that $\Psi= {\rm id}
\mod\mu_1$, which will
imply that $\Phi$ is a measure space isomorphism (Definition \ref
{defisomorphisms}).

By Kuratowski's theorem, $(\Omega_1,{\BB}_1)$ is isomorphic to either
$(E,2^E)$, where $E$ is an at most countable set, or $(\bbR, {\BB
}_\bbR
)$ -- the
real line equipped with the Borel $\sigma$-algebra. The discrete case
is trivial. Suppose now the latter is true and without loss of generality
let $(\Omega_1,{\BB}_1) \equiv(\bbR, {\BB}_\bbR)$. Let
$\varepsilon>0$ be
arbitrary and suppose that
$\mu_1(\{|\Psi- {\rm id}|>\varepsilon\}) >0$, then for some $k\in
\bbZ$, we
have for
$D:=\{|\Psi- {\rm id}|>\varepsilon\}\cap[k\varepsilon,
(k+1)\varepsilon)$ that $\mu_1(D)
>0$. But then
$\Psi(x)\notin[k\varepsilon, (k+1)\varepsilon)$, for all \mbox{$x\in D$},
and hence
$\Psi(D) \cap D =\varnothing$. This contradicts the fact that $[\Psi(D)]
= [D]$ because \mbox{$\mu_1(D)>0$}. Since
$\varepsilon>0$ was arbitrary, it follows that $\Psi= {\rm id} \mod
\mu_1$
and hence $\Phi^{-1} = \widetilde\Phi \mod\mu_1$.

To complete the proof, we need to show the uniqueness of $\Phi$. Assume
that $\Phi_*\dvtx \Omega_2\to\Omega_1$ is another measure space isomorphism
such that for all $t\in T$, $f_t^{(2)}=f_t^{(1)}\circ\Phi_*$, $\mu
_2$-a.e. Then, relation (\ref{eqequalpreimages}) holds with $\Phi$
replaced by $\Phi_*$,
which implies that $\Phi_*$ induces the same $\sigma$-isomorphism $U$ as
$\Phi$. Since $\Phi_*$ is a measure space\vspace*{1pt} isomorphism, the measurable
map $\widetilde\Phi:= (\Phi_*)^{-1}$ induces the $\sigma$-isomorphism
$U^{-1}$ and hence $\Psi:= \Phi\circ\widetilde\Phi$ induces the
identity $\sigma$-isomorphism on the Borel space $(\Omega_1,\BB_1)$.
As argued above, this implies that $\Phi\circ\widetilde\Phi= {\rm
id}$, ($\mod \mu_1$).
\end{pf}

\subsection{Proofs in the max-i.d. case}
\begin{pf*}{Proof of Theorem~\ref{theominspecrepmaxexists}}
Write $\R_+=[0,\infty)$. Let $T_0$ be the at most countable set
appearing in Condition~\textup{S}. Let $\R_+^{T_0}$ be the space of functions
$\varphi\dvtx T_0\to\R_+$ endowed with the product \mbox{$\sigma$-}algebra~$\BB$.
Denote by $\nu$ the exponent measure of the process $\{X(t), t\in T_0\}
$; see Vatan \cite{vatan85}. It is a $\sigma$-finite measure on $\R
_+^{T_0}$ such that for every $t_1,\ldots,t_n\in T_0$ and $x_1,\ldots,x_n>0$ we have
%
%
\begin{equation}
\label{eqmaxidspecmeasurefidi} \P\bigl\{ X(t_j) < x_j, 1\leq j\leq n
\bigr\} = \exp \Biggl\{ -\nu \Biggl( \bigcup_{j=1}^n
\bigl\{\varphi\in\R_+^{T_0}\dvtx  \varphi(t_j) \geq
x_j\bigr\} \Biggr)  \Biggr\}.
\end{equation}
We\vspace*{1pt} agree that $\nu(\{0\})=0$ (which is different from~\cite{vatan85}).
Taking the coordinate mappings $f_t\dvtx \R_+^{T_0}\to\R$, $f_t(\varphi
)=\varphi(t)$, $t\in T_0$, we therefore obtain a spectral
representation of $\{X(t), t\in T_0\}$ on $(\R_+^{T_0}, \BB)$. To see
this, compare~(\ref{eX-fdd}) and~(\ref{eqmaxidspecmeasurefidi}). Let
$t\in T$ be arbitrary. Condition~\textup{S} states that there exists a
sequence $\{t_n\}_{n\in\N}\subset T_0$ such that $X(t_n)\to X(t)$ in
probability. Thus, the sequence $X(t_n)$ is Cauchy in probability. By
the equality of the finite-dimensional distributions, the sequence
$I(f_{t_n})$ is Cauchy in probability, and therefore, it converges in
probability. By Theorem~4.5 in~\cite{balkemaetal93}, there is a
function $f_t\in\LL^{\vee}(\R_+^{T_0},\BB,\nu)$ such that $I(f_{t_n})$
converges in probability to $I(f_t)$.
Theorem~4.4 of~\cite{balkemaetal93} implies that
the finite-dimensional distributions of $\{I(f_t), t\in T\}$ and $\{
X(t), t\in T\}$ are equal, that is, the collection $\{f_t, t\in T\}$
is a spectral representation of $\{X(t), t\in T\}$ on $(\R_+^{T_0},\BB
)$. Since the coordinate functions $f_t$, $t\in T_0$, generate the
product \mbox{$\sigma$-}algebra $\BB$, and $\nu(\bigcap_{t\in T_0}\{f_t=0\}
)=\nu
(\{0\})=0$, this representation is minimal. To complete the proof note
that by Kuratowski's theorem, for at most countable $T_0$, the
measurable space $(\R_+^{T_0},\BB)$ is isomorphic to $[0,1]$ endowed
with the Borel $\sigma$-algebra.
\end{pf*}

\begin{pf*}{Proof of Theorem~\ref{theominspecrepmaxunique}}
As in Lemma~\ref{lft-point}, we define two measurable mappings $F_i\dvtx
(\Omega_i, \BB_i)\to(\R^T, \BB)$
by
\[
F_i(\omega)=\bigl(f^{(i)}_t(\omega)
\bigr)_{t\in T},\qquad \omega\in\Omega_i, i=1,2. %
\]
The\vspace*{1pt} first condition of Lemma~\ref{lft-point} is satisfied by the
assumption of minimality. We will show that the induced measures $\mu
_1\circ F_1^{-1}$ and $\mu_2\circ F_2^{-1}$ are equal on $(\R^T,\BB)$.
We will prove that for all $t_1,\ldots,t_n\in T$ and all intervals
$[x_1, y_1),\ldots, [x_n,y_n)\subset\R$ we have
%
%
\begin{equation}
\label{eqmu1mu2eqfidi} \mu_1 \Biggl( \bigcap_{j=1}^n
\bigl\{x_j\leq f_{t_j}^{(1)}< y_j
\bigr\} \Biggr) = \mu_2 \Biggl( \bigcap_{j=1}^n
\bigl\{x_j\leq f_{t_j}^{(2)} < y_j
\bigr\} \Biggr).
\end{equation}
Recall that $\{f_t^{(i)}, t\in T\}$, $i=1,2$, are spectral
representations of the same process $X$. By~(\ref{eX-fdd}), we have
that for all $x_1,\ldots,x_n>0$,
%
%
\begin{equation}
\label{etheominspecrep-1} \mu_1 \Biggl( \bigcup_{j=1}^n
\bigl\{ f_{t_j}^{(1)} \geq x_j \bigr\} \Biggr) =
\mu_2 \Biggl( \bigcup_{j=1}^n
\bigl\{ f_{t_j}^{(2)} \geq x_j \bigr\} \Biggr).
\end{equation}
Note that $\mu_i(\{f_t^{(i)} >x\}) <\infty$ for all $x>0$, $t\in T$,
since $f_t^{(i)}\in\LL^{\vee}$. Using this fact and the
inclusion--exclusion formula, we obtain that
relation (\ref{etheominspecrep-1}) is also valid with the unions
therein replaced by intersections. This proves that~(\ref
{eqmu1mu2eqfidi}) holds provided that $0<x_j < y_j$ for all $j=1,\ldots,
n$. Note that this argument breaks down if $x_j=0$ for some $j$ since
we cannot apply the inclusion--exclusion formula to sets of infinite
measure. To show that the measures $\mu_1\circ F_1^{-1}$ and $\mu
_2\circ F_2^{-1}$ agree on the ``boundary'' of $\R_+^T$ we need a
separate argument.

We now show that~(\ref{eqmu1mu2eqfidi}) continues to hold even if some
of the $x_j$'s are allowed to be zero. We do not need to consider the
case of negative $x_j$'s since $f_t^{(i)}\geq0$, $\mu_i$-a.e., by
definition of $\LL^{\vee}$. By letting some of the $x_j$'s go to $0$
and using continuity of measure we obtain that~(\ref{eqmu1mu2eqfidi})
continues to hold if some of the sets of the form $\{x_j\leq
f_{t_j}^{(i)}< y_j\}$ therein\vspace*{1pt} are replaced by $\{0<f_{t_j}^{(i)}< y_j\}
$. By additivity of measure, the proof of~(\ref{eqmu1mu2eqfidi}) in
full generality will be completed if we show that~(\ref
{eqmu1mu2eqfidi}) continues to hold if some of the sets of the form $\{
x_j\leq f_{t_j}^{(i)}< y_j\}$ therein are replaced by $\{
f_{t_j}^{(i)}=0\}$. Let us make this statement precise. Take $l,m\in\N
_0$, $s_1,\ldots,s_l\in T$, $r_1,\ldots,r_m\in T$ and $0<u_1<v_1,
\ldots, 0<u_m<v_m$. Define two measurable sets $C_i\subset\Omega_i$,
$i=1,2$, by
\[
C_i=A_i\cap B_i,\qquad A_i=
\bigcap_{k=1}^l \bigl
\{f_{s_k}^{(i)}=0\bigr\},\qquad B_i=\bigcap
_{j=1}^m \bigl\{u_j\leq
f_{r_j}^{(i)}<v_j \bigr\},\qquad i=1,2.
\]
We will show that $\mu_1(C_1)=\mu_2(C_2)$. Suppose first that $m\neq
0$. Then, $\mu_i(C_i)=\mu_i(B_i)-\mu_i(B_i\cap D_i)$, where
\[
D_{i}=\bigcup_{k=1}^l \bigl
\{f_{s_k}^{(i)}>0\bigr\} = \bigcup
_{n\in\N}D_{i,n},\qquad D_{i,n}=\bigcup
_{k=1}^l \biggl\{\frac{1}n\leq
f_{s_k}^{(i)} < n \biggr\},\qquad i=1,2. %
\]
We have already shown that~(\ref{eqmu1mu2eqfidi}) holds if $x_j>0$ for
all $j=1,\ldots,n$. This implies that $\mu_1(B_1)=\mu_2(B_2)$ (where
both terms are finite since $m\neq0$). Also, by the
inclusion--exclusion formula, $\mu_1(B_1\cap D_{1,n})=\mu_{2}(B_2\cap
D_{2,n})$ for every $n\in\N$. Note that $D_{i,1}\subset
D_{i,2}\subset \cdots.$ Letting $n\to\infty$ and using the continuity of measure, we
obtain $\mu(B_1\cap D_1)=\mu_2(B_2\cap D_2)$. This proves that $\mu
_1(C_1)=\mu_2(C_2)$ in the case $m\neq0$.

Consider now the case $m=0$. In this case it is possible that $\mu
_i(B_i)=\infty$ and the above argument breaks down. We show that $\mu
_1(C_1)=\mu_2(C_2)$, or, equivalently, $\mu_1(A_1)=\mu_2(A_2)$.
We will use the minimality and an exhaustion argument (cf. Lemma~1.0.7
in~\cite{aaronsonbook}) to show that there is a sequence
$q_1,q_2,\ldots \in T$ such that
%
%
\begin{equation}
\label{equnionsuppomegai} \mu_i \biggl(\bigcap_{n\in\N}
\bigl\{f_{q_n}^{(i)}=0\bigr\} \biggr)=0,\qquad i=1,2.
\end{equation}
Fix $i\in\{1,2\}$. Since the measure $\mu_i$ is $\sigma$-finite, we
can represent $\Omega_i$ as a disjoint union of sets $E_1,E_2,\ldots
\in
\BB_i$ such that $\mu_i(E_k)<\infty$, $k\in\N$. Let $e_k=\inf_{Q}\mu
_i(\bigcap_{q\in Q} \{f_{q}^{(i)}=0\}\cap E_k)$, where the infimum is
taken over all at most countable sets $Q\subset T$. Clearly,
$e_k<\infty
$. For every $n\in\N$ we can find at most countable $Q_{kn}\subset T$
such that $\mu_i(\bigcap_{q\in Q_{kn}} \{f_{q}^{(i)}=0\}\cap
E_k)<e_k+\frac{1}n$. Since $Q_k:=\bigcup_{n\in\N} Q_{kn}$ is at most
countable, we have
$
e_k=\mu_i(F_k)
$, where $F_k=\bigcap_{q\in Q_k} \{f_{q}^{(i)}=0\}\cap E_k$.
It follows that for every $t\in T$, $f_t^{(i)}=0$ a.e. on $F_k$.
Otherwise, we could consider $Q_k\cup\{t\}$ and arrive at a contradiction.
By the assumption of minimality this implies that, we must have
$e_k=0$. This holds for every $k\in\N$. The proof of~(\ref
{equnionsuppomegai}) is completed by taking the union of the collections
$Q_k$, $k\in\N$.

Consider measurable sets
\[
G_{i,p}=A_i \cap \Biggl(\bigcap
_{k=1}^{p-1} \bigl\{f_{q_k}^{(i)}=0
\bigr\} \Biggr) \cap \bigl\{f_{q_p}^{(i)}>0\bigr\},\qquad p\in
\N, i=1,2. %
\]
We have $\mu_1(G_{1,p})=\mu_2(G_{2,p})$ for every $p\in\N$. Indeed, by
continuity of measure,
\[
\mu_i(G_{i,p})=\lim_{n\to\infty}
\mu_i \Biggl(A_i \cap \Biggl(\bigcap
_{k=1}^{p-1} \bigl\{f_{q_k}^{(i)}=0
\bigr\} \Biggr) \cap \biggl\{\frac{1}n \leq f_{q_p}^{(i)}<n
\biggr\} \Biggr). %
\]
The right-hand side does not depend on $i=1,2$ as a particular case of
$\mu_1(C_1)=\mu_2(C_2)$ in the case $m>0$.
It follows from~(\ref{equnionsuppomegai}) that
\[
\mu_1(A_1) = \sum_{p=1}^{\infty}
\mu_1(G_{1,p}) = \sum_{p=1}^{\infty}
\mu_2(G_{2,p}) = \mu_2(A_2).
\]
This completes the proof of~(\ref{eqmu1mu2eqfidi}).

It follows now from~(\ref{eqmu1mu2eqfidi}) that the measures $\mu
_1\circ F_1^{-1}$ and $\mu_2\circ F_2^{-1}$ coincide on the semi-ring
$\mathcal C$ consisting of sets of the form
$
\bigcap_{j=1}^n \{\varphi\dvtx T\to\R\dvtx  x_j\leq\varphi(t_j)< y_j\}$,
where $t_1,\ldots,t_n\in T$, $[x_1, y_1), \ldots, [x_n,y_n)\subset\R$.
Note that $\mathcal C$ generates the product $\sigma$-algebra $\BB$.
Also, by~(\ref{equnionsuppomegai}), we can represent $\R^T$ as
\[
\R^T = \bigcup_{n=1}^{\infty}
\bigcup_{k=1}^{\infty} \bigl\{\varphi\dvtx T\to\R\dvtx
k^{-1} \leq\varphi(q_n) < k \bigr\} \mod \mu _1
\circ F_1^{-1}\mbox{ and }\mu_2 \circ
F_2^{-1}. %
\]
Note that the sets in the union in the right-hand side have finite $\mu
_1\circ F_1^{-1}$ (and $\mu_2\circ F_2^{-1}$) measure and belong to the
semiring $\mathcal C$. The uniqueness of the extension of measure
theorem yields that $\mu_1\circ F_1^{-1}=\mu_2\circ F_2^{-1}$. The
assumptions of Lemma~\ref{lft-point} are verified. Lemma \ref
{lft-point} yields~(\ref{eqtheominspecrep}) and completes the proof of
the theorem.
\end{pf*}

\subsection{Proofs in the i.d. case}

We start with discussing some properties of the spectral representation.
Note first that the functional $I$ is not additive. Nevertheless, by
(\ref{ejoint-chf}) it follows that for all $f,g\in\LL^+$
%
%
\begin{eqnarray}\label{egamma-def}
I(f) + I(g) &=& I(f+g) + \gamma(f,g),\qquad\mbox{where}
\nonumber\\[-8pt]\\[-8pt]
\gamma(f,g)&:=&
\int_\Omega \bigl(a(f+g) - a(f) - a(g)  \bigr) \,\mathrm{d}\mu.\nonumber
\end{eqnarray}

The next result shows that $\gamma$ in (\ref{egamma-def}) is well
defined and
that this constant correction term can be controlled in terms of the
metric $d$.

%
%
\begin{lemma}\label{lgamma} For all $f, g \in\LL^+$, we have that
$\int_\Omega| a(f) + a(g) -a(f+g)| \,\mathrm{d}\mu<\infty$.
Moreover, for $\gamma$ and $d$ as in (\ref{egamma-def}) and (\ref
{ed-def}), we have
\[
\bigl|\gamma(f,g)\bigr| \le3 \bigl(d(f+g)\bigr)^2 + 2 \bigl(d(f) + d(g)\bigr)
d(f+g). %
\]
\end{lemma}

\begin{pf}
Consider the integral defining $\gamma(f,g)$ over the sets $A:= \{
|f+g|>1 \}$,
$B:=A^c\cap \{ |f| \le1\} \cap\{|g|\le1\}$ and $C:=A^c \cap(\{ |f|
>1\} \cup\{ |g| >1\})$, which form a
disjoint partition of $\Omega$.

Observe that over $B$ the integrand is zero, since $a(f) = f, a(g) =
g$ and $a(f+g) = f+g$ whenever
$|f|\le1, |g|\le1$, and $|f+g|\le1$. Note, on the other hand that
the set
$B^c = A \cup C \subset\{|f+g| >1\} \cup\{|f|>1\} \cup\{|g|>1\}$ has
a finite $\mu$ measure because
$f, g$ and $f+g$ belong to $\LL^+$. Since $|a(f+g)-a(f) - a(g)| \le3$
it therefore follows that
$\int_{\Omega}|a(f+g)-a(f) - a(g)| \,\mathrm{d}\mu<\infty$ and $\gamma(f,g)$ is
well defined.

Over $A$, we have that
\[
\int_{\{|f+g|>1\}} \bigl|a(f+g)-a(f) - a(g)\bigr| \,\mathrm{d}\mu\le3 \mu\bigl\{ |f+g| >1\bigr\}
\le 3 \bigl(d(f+g)\bigr)^2. %
\]
Now, focus on the set $C$. The function $a$ in (\ref{ea-def}) is
Lipschitz and in fact $|a(x) + a(y)| \le|x+y|$ for all
$x,y\in\R$. Therefore, $|a(f+g)-a(f) - a(g)| \le2 |f+g|$, and hence
\begin{eqnarray*}
\int_{C} \bigl|a(f+g)-a(f) - a(g)\bigr| \,\mathrm{d}\mu &\le& 2 \int
_{\{ |f+g| \le1\} } |f+g| (\ind_{\{|f|>1\}} + \ind_{\{|g|>1\}}) \,\mathrm{d}
\mu
\\
&\le& 2 d(f+g) \bigl(d(f) + d(g)\bigr),
\end{eqnarray*}
where the last relation follows from the Cauchy--Schwartz inequality
and the fact that $\mu\{|f|>1\} \le d(f)^2$. Combining the above two
bounds, we obtain the desired inequality.
\end{pf}

Relation (\ref{egamma-def}) readily implies the following result
on the sum of two spectral representations over the same space.

%
%
\begin{proposition}
Consider two i.d. processes $X_t^{(i)}:= I(f_t^{(i)}) + c_t^{(i)}, t\in T$,
where $\{f_t^{(i)}\}_{t\in T}\subset\LL^+(\Omega, \BB, \mu)$, and
$c_t^{(i)} \in\R, i=1,2$. Then, their sum
has the following spectral representation:
\[
\bigl\{ X_t^{(1)} + X_t^{(2)} \bigr
\}_{t\in T} \stackrel{d} {=} \bigl\{I\bigl(f_t^{(1)} +
f_t^{(2)}\bigr)+c_t^{(1)}
+c_t^{(2)} + \gamma\bigl(f_t^{(1)},f_t^{(2)}
\bigr), t\in T \bigr\}. %
\]
\end{proposition}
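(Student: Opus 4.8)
The plan is to read off the identity directly from Relation~\eqref{e:gamma-def}, interpreted \emph{pathwise} for the single Poisson process $\Pi_\mu$ that underlies both stochastic integrals $I(f_t^{(1)})$ and $I(f_t^{(2)})$. Since the two representations live over the same space $(\Omega,\BB,\mu)$ and are driven by the same $\Pi_\mu$, the sum $X_t^{(1)}+X_t^{(2)}$ is a genuine pathwise sum of random variables rather than a convolution of independent processes; this is exactly why the proposed integrand is $f_t^{(1)}+f_t^{(2)}$ and not something reflecting a sum of two independent L\'evy measures.

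First I would record admissibility: because $\LL^+$ is a linear space, $f_t^{(1)}+f_t^{(2)}\in\LL^+$ for every $t\in T$, so $I(f_t^{(1)}+f_t^{(2)})$ is well defined. Lemma~\ref{l:gamma} then guarantees that the correction term $\gamma(f_t^{(1)},f_t^{(2)})=\int_\Omega(a(f_t^{(1)}+f_t^{(2)})-a(f_t^{(1)})-a(f_t^{(2)}))\,d\mu$ is a finite deterministic constant for each $t$. The core step is a one-line application of~\eqref{e:gamma-def} with $f=f_t^{(1)}$ and $g=f_t^{(2)}$:
\[
I(f_t^{(1)})+I(f_t^{(2)})=I(f_t^{(1)}+f_t^{(2)})+\gamma(f_t^{(1)},f_t^{(2)})\quad\text{a.s.}
\]
Adding the deterministic shifts $c_t^{(1)}+c_t^{(2)}$ to both sides yields the claimed pointwise identity. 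For any finite set $t_1,\dots,t_n\in T$ these equalities hold simultaneously off a single null set, so the two random vectors coincide almost surely; in particular their laws agree, which is the required equality $\eqdistr$ of finite--dimensional distributions.

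The one genuine subtlety, and the step I would treat most carefully, is the status of~\eqref{e:gamma-def} as an almost sure identity, since applying a merely distributional version coordinate by coordinate would not yield the \emph{joint} finite--dimensional law. If one prefers to stay at the level of distributions, the alternative is to compare joint characteristic functions through~\eqref{e:joint-ch.f.}: applying that formula to the $2n$ functions $\{f_{t_j}^{(1)},f_{t_j}^{(2)}\}_{j=1}^n$, each weighted by the same $\theta_j$, reproduces the law of the left--hand side, while applying it to $\{f_{t_j}^{(1)}+f_{t_j}^{(2)}\}_{j=1}^n$ reproduces the law of the right--hand side, and matching the two exponents reduces to
\[
i\sum_j\theta_j\gamma(f_{t_j}^{(1)},f_{t_j}^{(2)})=i\sum_j\theta_j\int_\Omega\bigl(a(f_{t_j}^{(1)}+f_{t_j}^{(2)})-a(f_{t_j}^{(1)})-a(f_{t_j}^{(2)})\bigr)\,d\mu .
\]
Here the essential point is to resist splitting $\int_\Omega a(f_{t_j}^{(1)})\,d\mu$ off on its own, as that integral may diverge over the infinite measure space; only the combined integrand defining $\gamma$ is $\mu$--integrable, precisely by Lemma~\ref{l:gamma}, after which the matching is immediate.
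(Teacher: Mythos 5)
Your proposal is correct and takes essentially the same route as the paper: the paper's entire proof consists of the observation that Relation~\eqref{e:gamma-def} ``readily implies'' the proposition, which is precisely your core step of applying the almost-sure identity $I(f_t^{(1)})+I(f_t^{(2)})=I(f_t^{(1)}+f_t^{(2)})+\gamma(f_t^{(1)},f_t^{(2)})$ pointwise in $t$ and off a common null set for finite collections. Your supporting remarks also align with the paper: linearity of $\LL^+$ and Lemma~\ref{l:gamma} give well-definedness, and your characteristic-function fallback via \eqref{e:joint-ch.f.} (keeping the three $a$-terms combined in a single integrand) is exactly how the paper itself establishes \eqref{e:gamma-def} as an almost-sure, not merely distributional, identity.
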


Recall now that convergence in probability is metrized by the Ky Fan
distance which is given by
%
%
\begin{equation}
\label{eKyFan} d_{\mathrm{KF}}(\xi,\eta) \equiv d_{\mathrm{KF}}(\xi-\eta):=
\inf\bigl\{ \delta>0\dvtx  \P \bigl\{ |\xi-\eta| \ge\delta\bigr\} \le\delta\bigr\}.
\end{equation}
The next proposition shows that the metric $d$ on the space of
integrands is comparable to the metric $d_{\mathrm{KF}}$ on the space of integrals.

%
%
\begin{proposition}\label{pP-metric}
For all $f\in\LL^+$, we have
%
%
\begin{equation}
\label{epP-metric-ii} d_{\mathrm{KF}}\bigl(I(f)\bigr) \le2 d(f)^{2/3}\quad\mbox{and}\quad 1-\mathrm{e}^{-c d(f)^2} \le 2 d_{\mathrm{KF}}\bigl(I(f) - I(f)'
\bigr) \le4 d_{\mathrm{KF}}\bigl(I(f)\bigr),
\end{equation}
with $c=1-\sin(1)$, where $I(f)'$ is an independent copy of $I(f)$.
\end{proposition}

The following elementary inequality is used in the proof of Proposition
\ref{pP-metric}.

%
\begin{lemma}\label{lInequality} Let $X$ be a symmetric random
variable. Then
%
%
\begin{equation}
\label{elInequality} \sup_{|\theta| \le1} \bigl(1-\E \mathrm{e}^{\mathrm{i}\theta X}\bigr)
\le2 d_{\mathrm{KF}}(X).
\end{equation}
\end{lemma}

\begin{pf*}{Proof of Lemma \ref{lInequality}}
Since $X$ is symmetric, we have that its characteristic function $\phi
_X(\theta) = \E \mathrm{e}^{\mathrm{i}\theta X}, \theta\in\R$ is real and
\[
\bigl(1 - \phi_X(\theta)\bigr) = \int_{-\infty}^\infty
\bigl(1-\cos(\theta x)\bigr) F_X(\mathrm{d}x). %
\]
Note that $0\le1-\cos(u) \le u^2/2$, for all $u\in\R$. Thus, with
$\varepsilon\in(0,1]$, we have
\[
\bigl(1 - \phi_X(\theta)\bigr) \le\frac{|\theta\varepsilon|^2}{2} \int
_{-\varepsilon}^\varepsilon F_X(\mathrm{d}x) + \int
_{|x| \ge\varepsilon} F_X(\mathrm{d}x) \le\varepsilon+ \P\bigl\{ |X| \ge
\varepsilon\bigr\} %
\]
for all $|\theta|\le1\le\sqrt{2/\varepsilon}$. The inequality
(\ref{elInequality}) follows from the definition (\ref{eKyFan}) of
the Ky Fan distance functional.
\end{pf*}

\begin{pf*}{Proof of Proposition \ref{pP-metric}}
We first prove the second inequality in~(\ref{epP-metric-ii}).
Let $X:= I(f) -I(f)'$, where $I(f)'$ is an independent copy of $I(f)$.
Thus, in view of (\ref{ejoint-chf}), $X$ is
symmetric with characteristic function
%
%
\begin{equation}
\label{eP-metric-phiX} \phi_{X}(\theta) = \bigl|\E \mathrm{e}^{\mathrm{i}\theta I(f)}\bigr|^2
= \exp \biggl\{ -2 \int_\Omega\bigl(1-\cos(\theta f)\bigr) \,\mathrm{d}
\mu \biggr\},\qquad \theta\in\R.
\end{equation}
Now, by Lemma \ref{lInequality}, we obtain
\[
0\le\sup_{|\theta|\le1} \bigl( 1-\phi_{X}(\theta) \bigr)
\le2 d_{\mathrm{KF}}(X). %
\]
Thus, in view of (\ref{eP-metric-phiX}), using the fact that the
function $u\mapsto1-\mathrm{e}^{-2u}, u\ge0$ is strictly
increasing, the above supremum can be taken inside the exponential, and hence
%
%
\begin{equation}
\label{eP-metric-1} 1 - \mathrm{e}^{-2 A }:= 1- \exp \biggl\{ -2\sup
_{|\theta|\le1} \int_{\Omega} \bigl(1-\cos(\theta f)
\bigr) \,\mathrm{d} \mu \biggr\} \le2 d_{\mathrm{KF}}(X).
\end{equation}
We will focus on the term $A$ above and obtain a lower bound for it.
Notice that
\[
\sup_{|\theta|\le1} \int_{\{|f|\le1\}} \bigl(1-\cos(\theta
f)\bigr) \,\mathrm{d}\mu+ \sup_{|\theta|\le1} \int_{\{|f| >1\}}
\bigl(1-\cos(\theta f)\bigr) \,\mathrm{d}\mu\le A+A\equiv2 A. %
\]
Since $x^2/3 \le1-\cos(x), |x|\le1$, for the first term above, we have
\[
\frac{1}{3}\int_{\{|f| \le1\}} |f|^2 \,\mathrm{d}\mu= \sup
_{|\theta|\le1} \frac
{\theta^2}{3}\int_{\{|f| \le1\}}
|f|^2 \,\mathrm{d}\mu\le\sup_{|\theta|\le1} \int_{\{|f| \le1\}}
\bigl(1-\cos(\theta f)\bigr) \,\mathrm{d} \mu. %
\]
On the other hand, over the set $\{|f| >1\}$, we apply the inequality
$\sup_{|\theta|\le1} (1-\cos(\theta f)) \ge\int_0^1 (1-\cos
(\theta
f)) \,\mathrm{d}\theta= 1-\sin(f)/f$. By combining these two lower bounds,
we obtain
%
%
\begin{equation}
\label{eP-metric-15} \frac{1}{3}\int_{\{|f| \le1\}} |f|^2
\,\mathrm{d}\mu+ \int_{\{|f| > 1\}}  \biggl(1-\frac{\sin(f)}{f}  \biggr) \,\mathrm{d}
\mu \le2 A.
\end{equation}
Also, since $1-\sin(x)/x \ge1-\sin(1)=:c \approx0.1585 >0$, for all
$|x|\ge1$, we obtain further that
\[
c d(f)^2 \le\frac{1}{3}\int_{\{|f| \le1\}}
|f|^2 \,\mathrm{d}\mu+ \int_{\{|f|
> 1\}}  \biggl(1-
\frac{\sin(f)}{f}  \biggr) \,\mathrm{d} \mu. %
\]
In view of (\ref{eP-metric-1}), (\ref{eP-metric-15}), and the
monotonicity of $u\mapsto1-\mathrm{e}^{-u}$, we obtain
$
1-\mathrm{e}^{- c d(f)^2} \le2 d_{\mathrm{KF}}(X)$,
which, since $d_{\mathrm{KF}}(X) \equiv d_{\mathrm{KF}}(I(f) - I(f)') \le2
d_{\mathrm{KF}}(I(f))$, yields the second inequality in~(\ref{epP-metric-ii}).

We now establish the first inequality in (\ref{epP-metric-ii}). Let
$d:=d(f)\equiv(\int_{\Omega}1\wedge|f|^2 \,\mathrm{d}\mu)^{1/2}, f\in\LL^+$ and consider the sets $A=\{|f|\ge1\}$ and $B=\{|f|<1\}$.
Note that $\mu(A)<\infty$ and recall by (\ref{eqdefidstochint})
that $I(f \ind_A) = \int_{A} f \,\mathrm{d}\Pi_\mu- \int_A a(f) \,\mathrm{d}\mu$. From the
definition of $a$ and $d$, see~(\ref{ea-def}) and~(\ref{ed-def}), it
follows that $|\int_A a(f) \,\mathrm{d}\mu| \leq\mu(A)\leq d^2$
and therefore
%
%
\begin{equation}
\label{eP-metric-175} \P\bigl\{ \bigl|I(f\ind_A)\bigr| > d^2 \bigr\} \le
\P \biggl\{  \biggl|\int_{A} f \,\mathrm{d}\Pi _\mu  \biggr|\neq0
 \biggr\} \le1-\mathrm{e}^{-\mu(A)} \le1- \mathrm{e}^{-d^2}.
\end{equation}
The second inequality follows from the fact that $\int_{A} f\,\mathrm{d} \Pi_\mu$
is non-zero only when the Poisson point
process $\Pi_{\mu}$ has at least one point in the set $A$.
Also, $I(f \ind_B)$ has (by definition) expectation~$0$ and variance
$\int_B f^2 \,\mathrm{d}\mu\le d^2$. Thus, by the Chebyshev's inequality,
%
%
\begin{equation}
\label{eP-metric-18} \P\bigl\{ \bigl|I(f\ind_B)\bigr| > d^{2/3} \bigr\}
\leq d^{2/3}.
\end{equation}
Since $I(f) = I(f\ind_A) + I(f\ind_B)$, by (\ref{eP-metric-175}) and
(\ref{eP-metric-18}), in the case $d\leq1$, we get
\begin{eqnarray*}
\P\bigl\{ \bigl|I(f)\bigr| > 2 d^{2/3}\bigr\} &\leq&\P\bigl\{\bigl|I(f)\bigr|
>d^2 + d^{2/3}\bigr\}
\\
&\leq&\P\bigl\{\bigl|I(f\ind_A)\bigr| >d^2\bigr\}+\P\bigl\{\bigl|I(f
\ind_B)\bigr| >{d^{2/3}}\bigr\}
\\
&\leq& 1-\mathrm{e}^{-d^{2}}+d^{2/3}
\\
&\leq& 2 d^{2/3}.
\end{eqnarray*}
Hence $d_{\mathrm{KF}}(I(f)) \le2 d^{2/3}$, provided that $d\leq1$. This,
since $d_{\mathrm{KF}}(I(f)) \le1$ implies the first inequality in
(\ref{epP-metric-ii}).
\end{pf*}

\begin{pf*}{Proof of Proposition~\ref{pLLpluscomplete}}
The proof is standard.
Let $\{f_n\}_{n\in\N}\subset\LL^+$ be a Cauchy sequence in~$d$. Then,
for all $\varepsilon\in(0,1)$, we have
\[
\mu\bigl\{ |f_m - f_n| > \varepsilon\bigr\} \le\frac{1}{\varepsilon^2}
\int_\Omega1\wedge |f_m - f_n|^2
\,\mathrm{d}\mu = \frac{d(f_m,f_n)^2}{\varepsilon^2} \to0, %
\]
as $m,n\to\infty$, which shows that $\{f_n\}_{n\in\N}$ is Cauchy in
measure. Hence, there exists a sub-sequence
$\{n_k\}_{k\in\N}$ and a measurable function $f$, such that $f_{n_k}
\to f$, as $n_k\to\infty$, $\mu$-a.e. Now,
by the Fatou's lemma, we obtain
\[
d(f_{n_k}, f)^2 = \int_{\Omega} 1
\wedge|f_{n_k} - f|^2 \,\mathrm{d}\mu\le \liminf_{\ell\to\infty}
\int_{\Omega} 1\wedge|f_{n_k} - f_{n_\ell}|^2
\,\mathrm{d}\mu = \liminf_{\ell\to\infty} d(f_{n_k},f_{n_\ell})^2.
\]
This inequality implies that $d(f_{n_k},f)<\infty$, and hence $f\in
\LL
^+$, because $d(f,0)\le d(f,f_{n_k}) + d(f_{n_k},0) <\infty$.
Since $\{f_{n}\}_{n\in\N}$ is Cauchy in the metric $d$, we also have
that $d(f_{n_k},f)\to0$, as $n_k\to\infty$, and
hence $d(f_n,f)\to0$, as $n\to\infty$. Thereby proving that the metric
$d$ is complete.

Let now $(\Omega,{\BB})$ be Borel. Recall that the measure $\mu$ is
$\sigma$-finite. Then the space $L^2=L^2(\Omega,{\BB},\mu)$
($\subset
{\LL}^+$)
equipped with the usual $L^2$-norm is separable and let $\{f_n\}_{n\in
\bbN}$ be a dense subset of $L^2$.
By (\ref{ed-def}), for all $f\in{\LL}^+$, $f_n\in L^2$, and $K>0$,
we have
\begin{eqnarray*}
d(f,f_n)^2 &=& \int_{\Omega} 1
\wedge|f-f_n|^2 \,\mathrm{d}\mu
\\
&\leq&\int_{\{|f|\le K\}} |f-f_n|^2 \,\mathrm{d}\mu+ \int
_{\{ |f|>K\}} \mathrm{d}\mu
\\
&\leq&\int_{\Omega} ( f\ind_{\{|f|\le K\}} -
f_n)^2 \,\mathrm{d}\mu+ \mu\bigl\{|f| >K\bigr\}.
\end{eqnarray*}
Since $f\in{\LL}^+$, we have that $f\ind_{\{|f|\le K \}} \in L^2$ and
$\mu\{|f|>K\} \to0$, as $K\to\infty$. Thus,
by picking large enough $K$ and a suitable $f_n$, one can make
$d(f,f_n)$ arbitrarily small, showing
that $\{f_n\}_{n\in\bbN}$ is also dense in the metric space $({\LL
}^+,d)$, thereby proving separability.
\end{pf*}

\begin{pf*}{Proof of Proposition \ref{pcauchyprobabcauchyLplus}}
Suppose first that $d(f_n-f) + |c_n-c| \to0$, as $n\to\infty$.
Then, by Slutsky's theorem, it is enough to show that $I(f_n)$
converges in probability to $I(f)$, as $n\to\infty$.
By (\ref{egamma-def}), we have that $I(f_n) - I(f) = I (f_n - f) +
\gamma(f_n,-f)$. Proposition \ref{pP-metric} and the assumption
$d(f_n-f)\to0$
imply that $I(f_n-f) \stackrel{\P}{\to} 0, n\to\infty$. It
remains to
show that $\gamma(f,-f_n) \to0$, as
$n\to\infty$. By the triangle inequality for $d$, we have $|d(f_n)
-d(f)| \le d(f_n - f) \to0$, as $n\to\infty$, and
in particular $d(f_n), n\in\N$ is bounded. Thus, by Lemma \ref{lgamma}
applied to $f$ and $g:=-f_n$,
we obtain $\gamma(f,-f_n) \to0$, as $n\to\infty$. This completes proof
of the `if' part.

To prove the `only if' part, suppose that $I(f_n)+c_n\stackrel{\P
}{\to}
\xi, n\to\infty$, set
$\xi_{m,n}:= I(f_m) - I(f_n) + c_m - c_n$, and let $\xi_{m,n}'$ be and
independent copy of $\xi_{m,n}$.
Then, by using (\ref{ejoint-chf}) we obtain that
\[
\xi_{m,n} - \xi_{m,n}' \stackrel{d}
{=}I(f_m-f_n) - I(f_m-f_n)',
\]
where $I(f_m-f_n)'$ is an independent copy of $I(f_m-f_n)$. Now, by the
second bound in
(\ref{epP-metric-ii}) of Proposition \ref{pP-metric} applied to
$f:=f_m-f_n$, we obtain
\[
1- \mathrm{e}^{-c d(f_m-f_n)^2} \le2 d_{\mathrm{KF}} \bigl(I(f_m-f_n)
- I(f_m-f_n)'\bigr) \equiv
2d_{\mathrm{KF}} \bigl(\xi_{m,n} - \xi_{m,n}'
\bigr) \le4 d_{\mathrm{KF}}(\xi_{m,n}). %
\]
The right-hand side of the last inequality vanishes, as $m, n\to\infty
$, since the sequence $\{I(f_n) +c_n, n\in\N\}$ converges in probability
and therefore it is Cauchy in the Ky Fan metric. This implies that
$d(f_m -f_n)\to0, m,n\to\infty$, and since $(\LL^+,d)$
is complete (Proposition \ref{pLLpluscomplete}), there is an $f\in\LL
^+$, such that $d(f_n-f)\to0, n\to\infty$. Therefore, by the already
established `if' part, it follows that $I(f_n)\stackrel{\P}{\to}
I(f), n\to\infty$. This, and the fact that $I(f_n) + c_n \stackrel{\P
}{\to}
\xi, n\to\infty$ imply (by Slutsky) that the sequence $c_n$ converges
to a constant $c$ and $\xi= I(f) +c$.
This completes the proof.
\end{pf*}

\begin{pf*}{Proof of Theorem~\ref{theominspecrepsumexists}}
Let $T_0$ be the at most countable subset of $T$ appearing in Condition~\textup{S}. Consider the space
$\bbR^{T_0}$, equipped with the product $\sigma$-algebra $\BB$.
Following \cite{maruyama70} (see also \cite{roy07}), let $\mu$ be the
L\'evy
measure of $\{X(t), t\in T_0\}$ on $\bbR^{T_0}$.
For $t\in T_0$, we define the coordinate mappings $f_t\dvtx \R^{T_0}\to\R$
by $f_t(\varphi) = \varphi(t)$,
where $\varphi\dvtx  T_0\to\R$, $\varphi\in\R^{T_0}$. Then, $\{f_t,
t\in
T_0\}$ is a spectral representation of $\{X(t), t\in T_0\}$ by the
properties of the L\'evy measure.

For $t\notin T$, observe that by Condition~\textup{S}, there exists a
sequence $\{t_n\}\subset T_0$, such that $X(t_n)$ converges in
probability to $X(t)$, as $n\to\infty$. In other words, $I(f_n)+c_n$
converges in probability to $I(f)+c$, for some $c_n$ and $c$. Thus, by
Proposition
\ref{pcauchyprobabcauchyLplus}, the sequence of functions $f_{t_n}$ has
a limit in $({\LL}^+,d)$, as
$n\to\infty$. We take this limit to be the spectral function $f_t$.

Notice that the so-defined spectral representation is minimal. Indeed,
the $\sigma$-algebra $\sigma\{f_t, t\in T\}$
coincides with the product $\sigma$-algebra $\BB$ on $\R^{T_0}$. We
also have that
$\operatorname{supp}\{f_t, t\in T_0\} = \R^{T_0}$ ($\mod \mu$) because $\bigcap_{t\in T_0}\{f_t=0\}=\{0\}$, a set
whose L\'evy measure is $0$ by convention. To \mbox{complete} the proof,
observe that the measurable space
$(\bbR^{T_0},\BB)$ is Borel by Kuratowski's theorem.
\end{pf*}

\begin{pf*}{Proof of Theorem~\ref{theominspecrepsumunique}}
We are going to apply Lemma~\ref{lft-point}. Define the measurable
mappings $F_i\dvtx  (\Omega_i, \BB_i)\to(\R^T, \BB)$ by
\[
F_i(\omega)=\bigl(f^{(i)}_t(\omega)
\bigr)_{t\in T},\qquad \omega\in\Omega_i, i=1,2. %
\]
Minimality implies that the first condition of Lemma~\ref{lft-point} is
satisfied. We prove that $\mu_1\circ F_1^{-1}=\mu_2\circ F_2^{-1}$.
Let $t_1,\ldots,t_n \in T$ and observe that
in view of (\ref{ejoint-chf}) we have
\begin{eqnarray*}
\E \mathrm{e}^{\mathrm{i}\sum_{j=1}^n \theta_j X(t_j)} &=& \E\exp \Biggl\{ \mathrm{i}\sum_{j=1}^n
\theta_j \bigl(I\bigl(f_{t_j}^{(i)}
\bigr)+c_j^{(i)}\bigr) \Biggr\}
\\
&=& \exp \Biggl\{ \mathrm{i}\sum_{j=1}^n
c_j^{(i)} \theta_j + \int_{\bbR^n}
 \Biggl( \mathrm{e}^{\mathrm{i}\sum_{j=1}^n \theta_j x_j} - \mathrm{i} \sum_{j=1}^n
\theta_j a(x_j) - 1  \Biggr) \bigl(\mu_i
\circ G_{i}^{-1}\bigr) (\mathrm{d}x)  \Biggr\},
\end{eqnarray*}
where $G_i = (f_{t_j}^{(i)})_{j=1}^n\dvtx \Omega_i \to\bbR^n$ and
$c_1^{(i)}, \ldots, c_{n}^{(i)}\in\R$ are constants, $i=1,2$. The last
relation and the uniqueness of the L\'evy measure of the i.d. random
vector $(X(t_j))_{j=1}^n$
shows that $(\mu_1 \circ G_1)^{-1}(A) = (\mu_2\circ G_2)^{-1}(A)$ for
all Borel sets $A\subset\R^n\setminus\{0\}$. We need to show that
$(\mu_1 \circ G_1)^{-1}(\{0\}) = (\mu_2\circ G_2)^{-1}(\{0\})$. As in
the proof of Theorem~\ref{theominspecrepmaxunique} we can find a
sequence $q_1,q_2,\ldots \in T$ such that $\mu_i(\bigcap_{j\in\N}\{
f_{t_j}^{(i)} \})=0$, $i=1,2$. Consider measurable sets
\[
E_{i,p}= G_i^{-1}\bigl(\{0\}\bigr) \cap\Biggl(
\bigcap_{j=1}^{p-1} \bigl\{f_{q_j}^{(i)}=0
\bigr\}\Biggr) \cap\bigl\{f_{q_p}^{(i)}\neq0\bigr\}.
\]
For every $p$, we have shown that $\mu_1(E_{1,p})=\mu_2(E_{2,p})$. It
follows that
\[
\mu_1\bigl(G_1^{-1}\bigl(\{0\}\bigr)\bigr) =
\sum_{p=1}^{\infty} \mu_1(E_{1,p})=
\sum_{p=1}^{\infty} \mu_2(E_{2,p})=
\mu_2\bigl(G_2^{-1}\bigl(\{0\}\bigr)\bigr).
\]
This\vspace*{1pt} proves that $(\mu_1 \circ G_1)^{-1}(A) = (\mu_2\circ
G_2)^{-1}(A)$ for all Borel sets $A\subset\R^n$. In other words, the
measures $\mu_1\circ F_1^{-1}$ and $\mu_2\circ F_2^{-1}$ are equal on
the semiring $\mathcal C$ consisting of subsets $\{\varphi\dvtx T\to\R\dvtx
(\varphi(t_j))_{j=1}^n \in A\}$, where $A\subset\R^n$ is Borel. This
semiring generates the product $\sigma$-algebra $\BB$. Also, we have a
decomposition
\[
\R^T=\bigcup_{n=1}^{\infty}\bigcup
_{k=1}^{\infty} \bigl\{\varphi\dvtx T\to\R\dvtx
k^{-1}\leq\bigl|\varphi(q_n)\bigr|\leq k \bigr\} \mod
\mu_1\circ F_1^{-1} \mbox{ and }
\mu_2\circ F_2^{-1}. %
\]
Note that the sets on the right-hand side have finite $\mu_1\circ
F_1^{-1}$ (and $\mu_2\circ F_2^{-1}$) measure and belong to the
semiring $\mathcal C$.
By the uniqueness of measure extension theorem, the measures $\mu
_1\circ F_1^{-1}$ and $\mu_2\circ F_2^{-1}$ are equal. Lemma~\ref
{lft-point} completes the proof.
\end{pf*}

\subsection{Proof of Theorem~\texorpdfstring{\protect\ref{teoC-int-test}}{3.9}}
By Theorems \ref{theominspecrepmaxexists}, \ref
{theominspecrepsumexists} and \ref{theoflowrepexists}, the process $X$
has a minimal spectral
representation $g_t:= g_0 \circ T_t, t\in\T^d$ over a $\sigma$-finite
Borel space $(\widetilde\Omega,\widetilde \BB,\widetilde \mu)$,
where $\{
T_t, t\in\T^d\}$ is a measure
preserving and measurable flow (see also Proposition \ref{pmeasurability}).

Since the spectral representation $\{f_t, t\in T\}\subset\LL^{\vee
/+}(\Omega,\BB,\mu)$ is of full support, it is minimal if we set
${\mathcal
B} = \sigma\{ f_t, t\in T\}$.
Even though $\BB$ may not be Borel, Theorems \ref
{theominspecrepmaxunique}\textup{(i)} and \ref{theominspecrepsumunique}\textup{(i)}
imply that there exists a measurable measure-preserving mapping $\Phi\dvtx
(\Omega,\BB)\to(\widetilde\Omega,\widetilde \BB)$, such that for all
$t\in\T^d$,
$g_t\circ\Phi= f_t$ $\mu$-a.e. Therefore, by using the joint
measurability of the two representations and appealing to Fubini, we
see that
%
%
\begin{eqnarray}
\label{ef-via-g}
\int_{\T^d} \psi\bigl(\bigl|f_t(
\omega)\bigr|\bigr) \lambda(\mathrm{d}t) &=& \int_{\T^d} \psi\bigl( \bigl|
g_t\bigl( \Phi(\omega)\bigr) \bigr|\bigr) \lambda(\mathrm{d}t)
\nonumber\\[-8pt]\\[-8pt]
&\equiv& \int _{\T^d} \psi\bigl( \bigl| g_0\circ T_t
\bigl(\Phi(\omega)\bigr)\bigr|\bigr) \lambda(\mathrm{d}t) = \infty,\qquad \mu\mbox{-a.e.}\nonumber
\end{eqnarray}
which,\vspace*{1pt} since $\widetilde \mu= \mu\circ\Phi^{-1}$, shows that relation
(\ref{eC-test}) is equivalent to $\int_{\T^d} \psi(|g_0 \circ T_t
(\widetilde \omega)|)\lambda(\mathrm{d}t) = \infty$,
$\widetilde \mu$-a.e. Thus, using the criterion in Theorem \ref
{teoRoy-C-test} one can relate (\ref{eC-test}) to the conservativity of
the flow. More precisely, proceeding as in the
proof of Proposition 3.2 in \cite{roy2010}, let
\[
h(\widetilde \omega):= \sum_{\gamma\in\Z^d}
a_\gamma\int_{\gamma+
[0,1)^d} \psi\bigl(\bigl|g_0\circ
T_t (\widetilde \omega)\bigr|\bigr) \lambda(\mathrm{d}t), %
\]
where\vspace*{1pt} $a_\gamma>0$ and $\sum_{\gamma\in\Z^d} a_\gamma=1$. By Fubini's
theorem, the full support condition on $\{g_t, t\in T\}$
implies that $h\in\LL^1(\widetilde \Omega,\widetilde \BB,\widetilde \mu)$ and
$h>0$, $\widetilde \mu$-a.e. Observe also by applying Fubini again and
using the facts that
$\lambda$ is shift-invariant and the flow $\{T_t\}_{t\in\T^d}$ is
measure-preserving
\[
\sum_{\beta\in\Z^d} h\circ T_\beta(\widetilde
\omega) = \int_{\T^d} \psi \bigl(\bigl|g_0\circ
T_t(\widetilde \omega)\bigr|\bigr) \lambda(\mathrm{d}t). %
\]
Theorem \ref{teoRoy-C-test}, applied to the discrete flow $\{T_\beta\}
_{\beta\in\Z^d}$ shows that is conservative if and only if (\ref
{eC-test}) holds, which
completes the proof.

\section*{Acknowledgements}

We thank the editor-in-chief and an anonymous associate editor for
handling our paper. We are also grateful for exceptionally detailed
reports of two anonymous referees,
which helped us significantly improve the content and presentation of
the results. Silian A.~Stoev was partially supported by the NSF Grant
DMS-1106695 at the University of Michigan.




%

\printhistory
\end{document}